\theoremstyle{thmrm}
\theoremstyle{plain}
\newtheorem{thm}{Theorem}[section]
\newtheorem{lemma}[thm]{Lemma}
\newtheorem{prop}[thm]{Proposition}
\newtheorem{cor}[thm]{Corollary}
\newtheorem{defn}[thm]{Definition}
\theoremstyle{definition}
\newcommand{\dlabel}[1]{\ifmmode \text{\ttfamily \upshape [#1] } \else
{\ttfamily \upshape [#1] }\fi \label{#1}}
\newcommand{\B}{\operatorname{B} }
\newcommand{\C}{\operatorname{C} }
\newcommand{\Ho}{\operatorname{H} }
\newcommand{\Z}{\operatorname{Z} }
\newcommand{\Id}{\operatorname{Id}}
\newcommand{\Aut}{\operatorname{Aut} }
\newcommand{\Autb}{\operatorname{Autb} }
\newcommand{\Ext}{\operatorname{Ext} }
\newcommand{\Inn}{\operatorname{Inn} }
\newcommand{\Ker}{\operatorname{Ker} }
\newcommand{\Out}{\operatorname{Out} }
\newcommand{\Ann}{\operatorname{Ann} }
\newcommand{\Soc}{\operatorname{Soc} }
\newcommand{\IM}{\operatorname{Im} }
\title{Extensions and Well's type exact sequence of skew braces}
\author{Nishant}
\address{School of Mathematics, Harish-Chandra Research Institute, HBNI,
Chhatnag Road, Jhunsi, Allahabad - 211 019, INDIA}
\email{nishant@hri.res.in}
\subjclass[2010]{20E22, 20J05, 20J06,16T25}
\keywords{left skew brace,  extension,  cohomolgy, automorphism}
\begin{document}

\maketitle

\begin{abstract}
In this article,  we give a description of the split exact sequences of left skew braces.  We define a free action of the second cohomology group of a left skew brace $H$ by $Ann(I)$ on $Ext_{\alpha}(H, I)$ and show that this action becomes transitive if $I$ is a trivial skew brace.  We also generalize the Well's type exact sequence for extensions by the trivial skew brace.
\end{abstract}
\section{Introduction}
A  triple $(E,+,\circ)$, where $(E,+)$ and $(E, \circ)$ are groups is said to be a left skew brace if 
$$a \circ (b+c)=a\circ b-a+a \circ c$$
holds for all $a,b,c \in E$, where $-a$ denotes the inverse of $a$ in $(E, +)$.  In 2007,  Rump \cite{WR07} introduced classical braces to study involutive and non-degenerate solutions of the Yang-Baxter equation.  Later,  Guarnieri and Vendramin \cite{GV17}  generalized this concept to skew brace  to study the non-degenerate solution of the Yang-Baxter equation, which is further generalized to semi-braces by Catino, Colazzo, and Stefanelli in \cite{FMP21} to study non-bijective solutions of the Yang-Baxter equation.  In  \cite{DG16},  Ben David and Ginosar investigated extensions of bijective $1$-cocycles.  Carter, Elhambadi and Satio in \cite{CES} developed homology and cohomology theories for solution sets of the Yang-Baxter equations. Different homology theories for various structures related to solutions of the Yang-Baxter equations were investigated extensively by Lebed and Vendramin \cite{LV17}. Cohomology and extensions of  linear cycle sets with trivial actions is studied by Lebed and Vendramin \cite{LV16}.   Recently generalized  by Jorge A. Guccione,  Juan J. Guccione and Christian Valqui  \cite{GG21} to non trivial actions. Various type of products like matched product,  semi-direct product, asymmetric product has been defined for the solutions of Yang-Baxter equation [see \cite{DB18}, \cite{BCJO19},  \cite{CCS},  \cite{CCS1},  ,\cite{CCS2},   \cite{WR08}].  In \cite{NMY},  M. K. Yadav and author developed the theory of skew brace extensions for skew brace extensions by an abelian group  and developed the Well's type exact sequence for skew braces.  This work can be thought as a generalization of  \cite{DB18},  \cite{LV17} at the level of extensions.  The fundamental exact sequence of Wells for groups was introduced by C Wells in \cite{W71}.  The fundamental exact sequence of Wells with various applications is carried out in all fine details in \cite[Chapter 2]{PSY18}. A similar exact sequence for cohomology, extensions and automorphisms of quandles was constructed in \cite{BS20}.
 In this paper, we define a new product for the skew braces and construct few examples.  We give constructions for skew braces  similar to that of group theory and generalize the Well's type exact sequence for the trivial skew brace.

\section{Preliminaries}

An algebraic structure $(E, + , \circ)$ is said to be a \emph{left skew brace} if $(E,  +)$ and $(E, \circ)$ are a group and the following compatibility condition holds:
\begin{equation}\label{bcomp}
a \circ (b  + c ) = a \circ b -a + a\circ c
\end{equation}
for all $ a, b , c \in E$,  where $-a$ denotes the inverse of $a$ with respect to `$+ $'. 
Notice that the identity element $0$ of $(E,  +)$ coincides with the identity element  of $(E, \circ)$.

For a left skew brace $E$ and $a \in E$, define a map $\lambda_a : E \to E$ by
$$\lambda_a(b) = -a + (a \circ b)$$
for all $b \in E$. The automorphism group  of a group $G$ is denoted by $\Aut(G)$. 
We have the following  result for skew braces .
\begin{lemma}
Let $(E,+, \circ)$ be a left skew brace, then for each $a \in E$, the map $\lambda_a$ is an automorphism of $(E, +)$ and the map $\lambda : (E, \circ) \to \Aut(E, +)$ given by $\lambda(a) = \lambda_a$ is a group homomorphism.
\end{lemma}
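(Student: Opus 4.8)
The lemma has three assertions bundled together: (i) each $\lambda_a$ is a group endomorphism of $(E,+)$; (ii) each $\lambda_a$ is bijective, hence an automorphism; (iii) $\lambda\colon(E,\circ)\to\Aut(E,+)$ is a homomorphism. Let me think about how to prove each.

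**For (i):** I want to show $\lambda_a(b+c) = \lambda_a(b) + \lambda_a(c)$. Compute the left side: $\lambda_a(b+c) = -a + a\circ(b+c)$. Now apply the compatibility condition \eqref{bcomp}: $a\circ(b+c) = a\circ b - a + a\circ c$. So $\lambda_a(b+c) = -a + a\circ b - a + a\circ c = (-a + a\circ b) + (-a + a\circ c) = \lambda_a(b) + \lambda_a(c)$. That's it — just substitution. Note this uses the group structure of $(E,+)$ to regroup but nothing fancy. Also need $\lambda_a(0) = -a + a\circ 0 = -a + a = 0$ (since $0$ is a common identity), which is automatic for a homomorphism anyway.

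**For (iii), which I'd actually do before (ii):** I want $\lambda_{a\circ b} = \lambda_a \circ \lambda_b$ as maps $E\to E$. Take $c\in E$ and compute $\lambda_a(\lambda_b(c)) = \lambda_a(-b + b\circ c) = \lambda_a(-b) + \lambda_a(b\circ c)$ using part (i). Now $\lambda_a(b\circ c) = -a + a\circ(b\circ c) = -a + (a\circ b)\circ c$ by associativity of $\circ$. And $\lambda_a(-b) = -a + a\circ(-b)$; I need to relate $a\circ(-b)$ to things. From $0 = a\circ 0 = a\circ(b + (-b)) = a\circ b - a + a\circ(-b)$ we get $a\circ(-b) = a - a\circ b$, so $\lambda_a(-b) = -a + a - a\circ b = -(a\circ b)$. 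Therefore $\lambda_a(\lambda_b(c)) = -(a\circ b) + (a\circ b)\circ c = \lambda_{a\circ b}(c)$. Done. Then $\lambda_0 = \mathrm{Id}$ follows (or check directly: $\lambda_0(b) = -0 + 0\circ b = b$).

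**For (ii):** Once (iii) is established and $\lambda_0 = \mathrm{Id}$, bijectivity is free: $\lambda_a \circ \lambda_{a^{-1}} = \lambda_{a\circ a^{-1}} = \lambda_0 = \mathrm{Id}$ and likewise on the other side, where $a^{-1}$ is the inverse in $(E,\circ)$. So $\lambda_a$ is invertible with inverse $\lambda_{a^{-1}}$, and combined with (i) it is an automorphism of $(E,+)$. This also completes (iii) as a statement about $\mathrm{Aut}(E,+)$-valued homomorphism.

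**Main obstacle.** Honestly there is no serious obstacle — this is a foundational computation. The only place to be careful is the manipulation of $a\circ(-b)$: one must resist writing $a\circ(-b) = -(a\circ b)$, which is false; the correct identity $a\circ(-b) = a - a\circ b$ comes out of applying \eqref{bcomp} to $b + (-b) = 0$. Getting the order of additions right (the group $(E,+)$ is not assumed abelian, so regrouping must respect order) is the one spot where a sign/order slip could creep in, so I'd write those steps out explicitly rather than by inspection.
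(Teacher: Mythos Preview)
Your overall strategy is the standard one, and indeed the paper does not supply its own proof of this lemma (it is recorded as a known fact about skew braces). However, there is a genuine computational slip in your step (iii).

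You write ``From $0 = a\circ 0 = a\circ(b + (-b)) = a\circ b - a + a\circ(-b)$ we get $a\circ(-b) = a - a\circ b$.'' But $0$ is the identity of $(E,\circ)$ as well, so $a\circ 0 = a$, not $0$. The correct equation is
\[
a \;=\; a\circ b - a + a\circ(-b),
\]
which yields $a\circ(-b) = a - a\circ b + a$ and hence
\[
\lambda_a(-b) \;=\; -a + a - a\circ b + a \;=\; -(a\circ b) + a
\]
(equivalently, just use that $\lambda_a$ is a homomorphism: $\lambda_a(-b) = -\lambda_a(b) = -(-a + a\circ b) = -(a\circ b) + a$, remembering that $(E,+)$ need not be abelian). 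With your incorrect value $\lambda_a(-b) = -(a\circ b)$, the sum $\lambda_a(-b) + \lambda_a(b\circ c)$ would equal $-(a\circ b) - a + (a\circ b)\circ c$, not the $-(a\circ b) + (a\circ b)\circ c$ you assert; the stray $-a$ does not cancel.

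With the correct value the computation goes through cleanly:
\[
\lambda_a(-b) + \lambda_a(b\circ c)
\;=\; \big(-(a\circ b) + a\big) + \big(-a + (a\circ b)\circ c\big)
\;=\; -(a\circ b) + (a\circ b)\circ c
\;=\; \lambda_{a\circ b}(c).
\]
So your plan is sound and easily repaired; you just need to use $a\circ 0 = a$ and keep track of inversion order in the non-abelian group $(E,+)$ --- precisely the spot you yourself flagged as the place to be careful.
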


A  sub skew brace $I$ of a left  skew brace $E$ is said to be a \emph{left ideal} of $E$ if $\lambda_a(y) \in I$ for all $a \in E$ and $y \in I$.  A left ideal of $E$ is said to be an \emph{ideal} if $(I, \circ)$ is  a normal subgroup of $(E, \circ)$. The Socle of a skew brace $E$ is defined as $\Soc(E) = \Ker \lambda$  $\cap $ $\Z (E, +)$, where $\Z(E, +)$ represents the centre of the group $(E, +)$ and the annihilator of $E$ is defined as $\Ann(E)=\Soc(E) \cap \Z(E, \circ)$.

The following is an easy but important observation, which will be used several times in what follows.
\begin{lemma}
Let $E$ be a left skew brace. Then for all $a, b \in E$, the following hold:

(i) $a + b = a \circ \lambda^{-1}_a(b)$.

(ii) $a \circ b = a + \lambda_a(b)$.
\end{lemma}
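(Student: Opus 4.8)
The plan is to prove (ii) first by a direct computation from the definition of $\lambda_a$, and then to deduce (i) from (ii) by a change of variable, using that $\lambda_a$ is a bijection of $E$.

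For (ii), I would simply unwind the definition $\lambda_a(b) = -a + (a \circ b)$ and compute
$$a + \lambda_a(b) = a + \bigl(-a + (a \circ b)\bigr) = \bigl(a + (-a)\bigr) + (a \circ b) = 0 + (a \circ b) = a \circ b,$$
where the second equality is associativity of $(E,+)$, the third is the defining property of the additive inverse $-a$, and the last uses that $0$ is the identity of $(E,+)$, which (as recorded in the preliminaries) coincides with the identity of $(E,\circ)$. Note that this part uses nothing about the compatibility condition \eqref{bcomp}; it is purely a group-theoretic rewriting of the definition of $\lambda_a$.

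For (i), recall that by the first lemma the map $\lambda_a$ is an automorphism of $(E,+)$, hence in particular a bijection of the underlying set $E$, so $\lambda_a^{-1}(b)$ is a well-defined element of $E$ for every $b \in E$. Substituting $\lambda_a^{-1}(b)$ in place of $b$ in the identity just established in (ii) gives
$$a \circ \lambda_a^{-1}(b) = a + \lambda_a\bigl(\lambda_a^{-1}(b)\bigr) = a + b,$$
which is precisely the assertion of (i).

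I do not expect any genuine obstacle here: both identities fall out immediately once the definition of $\lambda_a$ is expanded, and the only point worth flagging is the invertibility of $\lambda_a$ — already supplied by the first lemma — which is what makes the right-hand side of (i) meaningful. One could alternatively prove (i) directly in the same style as (ii) by expanding $\lambda_a^{-1}(b)$, but routing through (ii) is cleaner and avoids manipulating the inverse automorphism explicitly.
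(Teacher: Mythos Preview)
Your proof is correct; the paper itself omits the argument entirely, labeling the lemma ``an easy but important observation'' and moving on, so there is nothing to compare against. Your route---prove (ii) by unwinding the definition of $\lambda_a$ and then deduce (i) by substituting $\lambda_a^{-1}(b)$---is the standard one and exactly what the paper presumably has in mind.
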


Let $E_1$ and $E_2$ be two  left skew braces. A map $f : E_1 \to E_2$ is said to be a \emph{skew brace homomorphism} if $f(a + b) = f(a) + f(b)$ and $f(a\circ b) = f(a) \circ f(b)$ for all $a, b \in E_1$.  A one-to-one and onto skew brace homomorphism from $E_1$ to itself is called an \emph{automorphism} of $E_1$. The \emph{kernel} of a homomorphism $f : E_1 \to E_2$ is defined to be the subset $\{a \in E_1 \mid f(a) = 0\}$ of $E_1$. It turns out that $\Ker(f)$, the kernel of $f$, is an ideal of $E_1$. The set of all skew brace automorphisms of a left skew brace $E$, denoted by $\Autb(E)$, is a group.

Let $H$ and $I$ be two left skew  braces. By an \emph{extension} of $H$ by $I$, we mean  a left skew brace $E$ with an exact sequence 
$$\mathcal{E} := 0 \to I \stackrel{i}{\to}  E \stackrel{\pi}{\to} H \to 0,$$ 
where $i$ and $\pi$ are injective and surjective brace homomorphisms, respectively.   Thereafter,  we denote the image of $y$ under $i$ by $y$ itself for all $y \in I$.  A set map $s : H \to E$ is called a \emph{set-theoretic section} of $\mathcal{E}$ if $\pi(s(h)) = h$ for all $h \in H$ and $s(0) = 0$. The abbreviation `st-section' will be used for `set-theoretic section' throughout. We call $\mathcal{E}$ to be split exact sequence of skew braces if there exist a st-section of $\mathcal{E}$ which is a skew brace homomorphism. 

\section{split extensions of skew brace}

Let $H$ and $I$ be two left skew braces. Let $ \mu: (H, +) \rightarrow Aut(I, +)$,  $\sigma : (H, \circ) \rightarrow Aut(I, \circ)$ be anti-homomorphisms, and $\nu: (H, \circ) \rightarrow Aut (I, +)$ be a homomorphism.  Let $\mu, \sigma $ and $\nu$ satisfy the following compatibility  condition
\noindent \begin{align}\label{SE}
\nu_{h_1\circ(h_2 + h_3)}(\sigma_{h_2 + h_3}(\nu^{-1}_{h_1}(y_1)) \circ \nu^{-1}_{h_2 + h_3}(\mu_{h_3}(y_2)+ y_3))& = \mu_{-h_1+(h_2 \circ h_3)}(\nu_{h_1 \circ h_2}(\sigma_{h_2}(\nu^{-1}_{h_1}(y_1)) \circ \nu^{-1}_{h_2}(y_2))-y_1) \notag \\
& +  \nu_{h_1 \circ h_3}(\sigma_{h_3}(\nu^{-1}_{h_1}(y_1)) \circ \nu^{-1}_{h_3}(y_3)) 
 \end{align}
 
for all $y_1, y_2 , y_3 \in I $ and $ h_1, h_2, h_3 \in H$.

\begin{thm}
Let  $H$ and $I$ be two skew braces with $(\nu ,\mu,\sigma),$ as defined above and satisfying \eqref{SE}, then the operations 
\begin{align}
 (h_1, y_1) + (h_2,  y_2)&=(h_1 + h_2, \mu_{h_1}(y_1) + y_2), \label{sb+} \\
 (h_1, y_1) \circ (h_2,  y_2)&=(h_1 \circ h_2, \nu_{h_1 \circ h_2}(\sigma_{h_2}(\nu^{-1}_{h_1}(y_1)) \circ \nu^{-1}_{h_2}(y_2)) \label{sbcirc}
\end{align}
define a left skew brace structure on $ H \times I $.
\end{thm}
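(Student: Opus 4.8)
The plan is to verify the left skew brace axioms for $(H \times I, +, \circ)$ directly, in three stages: first show $(H \times I, +)$ is a group, then show $(H \times I, \circ)$ is a group, and finally verify the compatibility condition \eqref{bcomp}. For the additive structure, the operation \eqref{sb+} is precisely the semidirect product $H \ltimes_\mu I$ of the groups $(H,+)$ and $(I,+)$ along the anti-homomorphism $\mu$ (an anti-homomorphism $\mu:(H,+)\to \Aut(I,+)$ gives a left action via $h\cdot y = \mu_h(y)$ up to the usual reversal, so associativity $( (h_1,y_1)+(h_2,y_2))+(h_3,y_3) = (h_1,y_1)+((h_2,y_2)+(h_3,y_3))$ reduces to $\mu_{h_1+h_2} = \mu_{h_2}\circ\mu_{h_1}$), with identity $(0,0)$ and inverse $-(h,y) = (-h, \mu_{-h}^{-1}(-y)) = (-h, -\mu_{h}(\ )^{-1}\dots)$; I would record the inverse formula explicitly since it is needed later. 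This stage is routine.

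For the multiplicative structure \eqref{sbcirc}, associativity is the genuinely laborious computation. Writing $F(h_1,y_1,h_2,y_2) := \nu_{h_1\circ h_2}(\sigma_{h_2}(\nu^{-1}_{h_1}(y_1))\circ\nu^{-1}_{h_2}(y_2))$ for the second coordinate, one must check
\[
F\bigl(h_1\circ h_2,\,F(h_1,y_1,h_2,y_2),\,h_3,\,y_3\bigr) = F\bigl(h_1,\,y_1,\,h_2\circ h_3,\,F(h_2,y_2,h_3,y_3)\bigr),
\]
expanding both sides using that $\sigma$ is an anti-homomorphism of $(H,\circ)$ and $\nu$ a homomorphism of $(H,\circ)$ (so $\nu_{h_1\circ h_2} = \nu_{h_1}\nu_{h_2}$ and $\sigma_{h_2}\circ\sigma_{h_3}$-type reversals appear), and the fact that $(I,\circ)$ is a group so $\sigma_{h_3}$ and $\sigma_{h_2}$ distribute over $\circ$ in $I$. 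The $\nu$'s cancel in a telescoping fashion and both sides collapse to $\nu_{h_1\circ h_2\circ h_3}\bigl(\sigma_{h_2\circ h_3}(\nu^{-1}_{h_1}(y_1))\circ\sigma_{h_3}(\nu^{-1}_{h_2}(y_2))\circ\nu^{-1}_{h_3}(y_3)\bigr)$; I would present this cancellation rather than every intermediate line. The identity is again $(0,0)$, and the inverse of $(h,y)$ under $\circ$ is forced by solving $F(h,y,h',y')=0$, giving $h' = h^{-1}$ (inverse in $(H,\circ)$) and $y'$ an explicit expression in $\nu, \sigma$ and the $\circ$-inverse of $\sigma_{h^{-1}}(\nu_h^{-1}(y))$ in $(I,\circ)$.

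The main obstacle — and the only place the compatibility hypothesis \eqref{SE} is used — is verifying the brace axiom
\[
(h_1,y_1)\circ\bigl((h_2,y_2)+(h_3,y_3)\bigr) = (h_1,y_1)\circ(h_2,y_2) \;-\;(h_1,y_1)\;+\;(h_1,y_1)\circ(h_3,y_3).
\]
The first coordinate reduces to the brace axiom in $H$ itself, namely $h_1\circ(h_2+h_3) = h_1\circ h_2 - h_1 + h_1\circ h_3$, which holds since $H$ is a skew brace. The second coordinate is exactly equation \eqref{SE}: the left-hand side's second coordinate is $\nu_{h_1\circ(h_2+h_3)}\bigl(\sigma_{h_2+h_3}(\nu^{-1}_{h_1}(y_1))\circ\nu^{-1}_{h_2+h_3}(\mu_{h_3}(y_2)+y_3)\bigr)$ after expanding \eqref{sb+} inside \eqref{sbcirc}, while the right-hand side, after expanding $\circ$, then $-(h_1,y_1)$, then $+$, produces exactly $\mu_{-h_1+(h_2\circ h_3)}\bigl(\nu_{h_1\circ h_2}(\cdots)-y_1\bigr) + \nu_{h_1\circ h_3}(\cdots)$. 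So the proof of this step is: expand both sides honestly using \eqref{sb+}, \eqref{sbcirc}, and the additive inverse formula from stage one, and observe that the resulting identity is literally \eqref{SE}. The care needed is purely bookkeeping — tracking which inverse ($+$ versus $\circ$, in $H$ versus $I$) appears where, and using that $\mu_{h}$ is an additive automorphism so it commutes past the $+$ and the $-$. I would organize stages one and two as short lemmas or inline verifications and devote the bulk of the write-up to the clean identification of stage three with \eqref{SE}.
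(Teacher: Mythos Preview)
Your proposal is correct and follows exactly the approach the paper takes: the paper's proof is a two-line remark that the group axioms for $+$ and $\circ$ are routine and that the brace compatibility \eqref{bcomp} is precisely the hypothesis \eqref{SE}, which is exactly the three-stage verification you outline in detail. Your write-up simply makes explicit what the paper leaves as ``easy to check.''
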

\begin{proof}
It is easy to check that the given operations define group structure on $H \times I$ and the condition \eqref{bcomp}  follows from the compatibility condition of $(\nu, \mu, \sigma)$.   
\hfill $\Box$

\end{proof}

We call this structure a  \emph{split semi-direct product} of $H$ by $I$ with respect to the triplet $( \nu,\mu, \sigma)$ and denote it by $(H, I, \nu,\mu, \sigma)$.

\begin{lemma}
Let $(H, I, \nu,\mu, \sigma)$ be  split semi-direct product of $H$ by $I$ with respect to some triplet $(\nu,\mu,\sigma)$.  Then the following short exact sequence of skew braces
$$ \mathcal{E} := 0 \to I \stackrel{i}{\to}  (H, I, \nu, \mu,\sigma) \stackrel{\pi}{\to} H \to 0 $$
 splits, where $i$ and $\pi$ are natural injection and projection respectively.
\end{lemma}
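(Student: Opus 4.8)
The plan is to exhibit an explicit splitting st-section and verify it is a skew brace homomorphism. Define $s : H \to (H, I, \nu, \mu, \sigma)$ by $s(h) = (h, 0)$. Clearly $s(0) = (0,0) = 0$ and $\pi(s(h)) = \pi(h, 0) = h$, so $s$ is a st-section of $\mathcal{E}$; it remains to check $s$ respects both operations. For the additive structure, $s(h_1) + s(h_2) = (h_1, 0) + (h_2, 0) = (h_1 + h_2, \mu_{h_1}(0) + 0) = (h_1 + h_2, 0) = s(h_1 + h_2)$, using that $\mu_{h_1}$ is a group automorphism of $(I, +)$ and hence fixes $0$. For the circle structure, $s(h_1) \circ s(h_2) = (h_1, 0) \circ (h_2, 0) = (h_1 \circ h_2,\ \nu_{h_1 \circ h_2}(\sigma_{h_2}(\nu^{-1}_{h_1}(0)) \circ \nu^{-1}_{h_2}(0)))$; since $\nu^{-1}_{h_1}$, $\sigma_{h_2}$, $\nu^{-1}_{h_2}$ and $\nu_{h_1 \circ h_2}$ are all group automorphisms (of $(I,+)$ or $(I, \circ)$ as appropriate) they send $0$ to $0$, and $0 \circ 0 = 0$, so the second coordinate is $0$ and $s(h_1) \circ s(h_2) = (h_1 \circ h_2, 0) = s(h_1 \circ h_2)$. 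Hence $s$ is a skew brace homomorphism.

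Next I would verify that $i$ and $\pi$ are indeed skew brace homomorphisms making the sequence exact, so that $\mathcal{E}$ is a genuine extension in the sense defined in the preliminaries. The map $i : I \to (H, I, \nu, \mu, \sigma)$ is $y \mapsto (0, y)$: it is additive since $(0, y_1) + (0, y_2) = (0, \mu_0(y_1) + y_2) = (0, y_1 + y_2)$, provided $\mu_0 = \mathrm{Id}$, which holds because $\mu$ is an anti-homomorphism from $(H,+)$; similarly $(0, y_1) \circ (0, y_2) = (0, \nu_0(\sigma_0(\nu^{-1}_0(y_1)) \circ \nu^{-1}_0(y_2))) = (0, y_1 \circ y_2)$ since $\nu_0, \sigma_0$ are identity maps. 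The map $\pi : (h, y) \mapsto h$ is clearly a homomorphism for both operations by the formulas \eqref{sb+} and \eqref{sbcirc}, $i$ is injective, $\pi$ is surjective, and $\ker \pi = \{(0, y) : y \in I\} = \operatorname{im} i$, so the sequence is exact.

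Combining the two parts, $\mathcal{E}$ is a short exact sequence of skew braces admitting the skew brace homomorphic st-section $s$, which is exactly the definition of a split exact sequence of skew braces. I do not expect any serious obstacle here: the only things to be careful about are the bookkeeping conventions (that $\mu_0, \sigma_0, \nu_0$ are identity maps, which follows from $\mu, \sigma$ being anti-homomorphisms and $\nu$ a homomorphism, and that automorphisms fix the group identity $0$), and the fact already recorded in the preliminaries that the additive and multiplicative identities of a skew brace coincide, so that "$0$" unambiguously denotes the identity in both coordinates. The mildly delicate point is simply to confirm that plugging $y_1 = y_2 = 0$ into the structure formula \eqref{sbcirc} collapses to $0$, which is immediate once one records that each of the maps involved is a group automorphism.
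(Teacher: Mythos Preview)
Your proof is correct and follows exactly the same approach as the paper: define $s(h)=(h,0)$ and observe it is simultaneously an st-section and a skew brace homomorphism. The paper's own proof is a one-line assertion that this is ``easy to check''; you have simply supplied the routine verifications (and the additional check that $i$, $\pi$ make $\mathcal{E}$ exact, which the paper takes for granted).
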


\begin{proof}
It is easy to check that the map $ s: H \rightarrow (H, I, \mu, \sigma,\nu)$ given by $s(h)=(h,0)$  is both  a homomorphism of skew braces and  a st-section of $\mathcal{E}$ simultaneously. 
\hfill $\Box$

\end{proof}

\begin{thm}
Let $\mathcal{E} := 0 \to I \stackrel{i}{\to}  E \stackrel{\pi}{\to} H \to 0$ be a split short exact sequence of skew braces.  Then $E$ is a split semi-direct product  of $H$ by $I$.
\end{thm}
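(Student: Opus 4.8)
The plan is to show that a splitting of $\mathcal{E}$ forces $E$ to be isomorphic, as a skew brace, to some split semi-direct product $(H, I, \nu, \mu, \sigma)$. So fix a skew brace homomorphism $s : H \to E$ which is also a st-section, i.e.\ $\pi \circ s = \mathrm{id}_H$ and $s(0) = 0$. The first step is to produce a bijection $\Phi : H \times I \to E$. Since every $e \in E$ satisfies $\pi(e - s(\pi(e))) = 0$, there is a unique $y \in I$ with $e = s(\pi(e)) + y$ (identifying $I$ with its image under $i$); one checks this presentation is unique, and so the map $\Phi(h, y) = s(h) + y$ is a bijection with inverse $e \mapsto (\pi(e),\, -s(\pi(e)) + e)$. (One should verify that $-s(h)+e$ indeed lands in $I$, which is immediate from exactness.)

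The second step is to read off the three operators. For $h \in H$, conjugation/translation by $s(h)$ restricts to automorphisms of $I$: define $\mu_h \in \Aut(I,+)$ by $\mu_h(y) = s(h) + y - s(h)$ — wait, more carefully, $\mu_h$ should be extracted from how the additive translate $s(h) + y$ interacts, and since $(s(h), \mu_h)$ must make \eqref{sb+} hold, one is forced to set $\mu_h(y) := -s(h) + \bigl(s(h) + y\bigr)$ read through the additive group, i.e.\ $\mu$ records the additive action; because $(E,+)$ is a group with $I$ normal in it (as $I$ is an ideal), $\mu_h$ is a well-defined automorphism of $(I,+)$ and $\mu : (H,+) \to \Aut(I,+)$ is an anti-homomorphism by associativity. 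Similarly, using the $\lambda$-maps of $E$ together with $s$ one defines $\nu : (H,\circ) \to \Aut(I,+)$ from the restriction of $\lambda_{s(h)}$ to $I$ (this is a homomorphism because $\lambda$ is, by the first Lemma, and $s$ is a $\circ$-homomorphism), and $\sigma : (H,\circ) \to \Aut(I,\circ)$ from $\circ$-conjugation by $s(h)$ on the normal subgroup $(I,\circ) \trianglelefteq (E,\circ)$, which is an anti-homomorphism. The precise bookkeeping of inverses in \eqref{sbcirc} tells me exactly which composites to take, so I would reverse-engineer $\nu,\mu,\sigma$ from formulas \eqref{sb+}–\eqref{sbcirc} rather than guess.

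The third step is to verify that this triple $(\nu, \mu, \sigma)$ satisfies the compatibility condition \eqref{SE}. This is where the real work lies, and it is the main obstacle: \eqref{SE} is precisely the translation, under $\Phi$, of the brace axiom \eqref{bcomp} holding in $E$. So rather than checking \eqref{SE} directly, I would argue that $\Phi$ transports the genuine operations of $E$ to exactly the formulas \eqref{sb+} and \eqref{sbcirc} — using that $s$ is a homomorphism for both $+$ and $\circ$, Lemma (ii) $a\circ b = a + \lambda_a(b)$ to expand products, and the definitions of $\mu,\nu,\sigma$ — and then \eqref{SE} follows automatically from \eqref{bcomp} in $E$ together with Theorem stating that such a triple yields a skew brace (or, read backwards, that the skew-brace identity on $H\times I$ with these operations \emph{is} \eqref{SE}).

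Finally, having shown $\Phi : (H, I, \nu,\mu,\sigma) \to E$ is a skew brace isomorphism carrying the natural injection and projection to $i$ and $\pi$, I conclude by the previous Lemma that $\mathcal{E}$ is equivalent to the canonical split sequence of $(H,I,\nu,\mu,\sigma)$, so $E$ is a split semi-direct product of $H$ by $I$, as claimed. \hfill $\Box$
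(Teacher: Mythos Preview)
Your approach is essentially the paper's own: use the splitting $s$ to write every element uniquely as $s(h)+y$, extract the three actions $\nu,\mu,\sigma$ from $\lambda_{s(h)}$, additive conjugation, and $\circ$-conjugation respectively, then observe that the brace identity \eqref{bcomp} in $E$ is exactly condition \eqref{SE}, so the bijection $\Phi$ is a skew brace isomorphism onto $(H,I,\nu,\mu,\sigma)$. The paper carries this out with the explicit formulas $\nu_h(y)=-s(h)+(s(h)\circ y)$, $\mu_h(y)=-s(h)+y+s(h)$, $\sigma_h(y)=s(h)^{-1}\circ y\circ s(h)$.

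One concrete slip to fix: your candidate $\mu_h(y):=-s(h)+\bigl(s(h)+y\bigr)$ collapses to the identity and is not the correct additive action; the right formula is conjugation $\mu_h(y)=-s(h)+y+s(h)$, which is what actually appears when you expand $(s(h_1)+y_1)+(s(h_2)+y_2)$ and push $y_1$ past $s(h_2)$. Once you use the correct $\mu$, your ``reverse-engineer from \eqref{sb+}--\eqref{sbcirc}'' plan goes through exactly as in the paper.
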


\begin{proof}
Let the short exact sequence $\mathcal{E} := 0 \to I \stackrel{i}{\to}  E \stackrel{\pi}{\to} H \to 0$ split.  Then there exists a st-section $s :H \rightarrow E$,  which is also a skew brace homomorphism.  Define $\mu: H \longrightarrow Aut(I, +)$,  $\sigma : H \rightarrow Aut(I, \circ)$, and $\nu : H \rightarrow Aut(I, +) $ by
\begin{align}\label{actions}
\nu_h(y) & =-s(h) + (s(h) \circ y),\notag\\
\mu_h(y) & =-s(h)+ y+ s(h),  \\
\sigma_h(y) & =s(h)^{-1} \circ y \circ s(h).\notag
\end{align}
Since $E$ is a skew brace,  we have 
\begin{equation}\label{sbc1}
(s(h_1) + y_1) \circ \big{(}s(h_2) + y_2 + s(h_3) +y_3 \big{)} = (s(h_1)+y_1) \circ (s(h_2) + y_2) - (s(h_1) + y_1) + (s(h_1) + y_1) \circ (s(h_3) + y_3). 
\end{equation}
Using \eqref{sbc1} and linearity of $s$ in `$+$' and `$\circ$',  we can easily establish that $(\nu, \mu, \sigma)$ satisfies \eqref{SE}.  Hence we have semi-direct product  $(H,I,\mu,\sigma,\nu)$.  We know that every element $ x \in E$ can be uniquely written as $x=s(h)+ y$.  Define $\phi : E \rightarrow (H,I,\mu,\sigma,\nu)$ by $\phi(s(h)+ y)=(h, y)$.  Then $\phi$ is an isomorphism of skew braces and the diagram
$$\begin{CD}
 0 @>i>> I @>>> E @>{{\pi} }>> H  @>>> 0\\
 &&  @V{\text{Id}}VV @V{\phi}VV @ VV{ \text{Id}}V \\
 0 @>i'>> I @>>> (H,I,\nu,\mu,\sigma) @>{{\pi^\prime} }>> H @>>> 0
\end{CD}$$
commutes,  where $i^\prime$ and $\pi^\prime$ are natural injection and projection,  respectively.  This completes the proof.

\hfill $\Box$

\end{proof}

\section{Examples}

In this section we provide some examples of split semi direct product of skew braces.   We have used GAP to compute $\nu, \mu$ and $\sigma$.

\noindent \textbf{Example 1}  Let $\mathbb{Z}$ and $\mathbb{C}$ be trivial skew braces,  Define $\nu, \mu, \sigma : \mathbb{Z} \rightarrow Aut(\mathbb{C})$ by $\nu_1(x)=\mu_1(x)=\sigma_1(x)=-x$. Using  \eqref{sb+} and \eqref{sbcirc},  we can define a skew brace structure on $\mathbb{Z} \times \mathbb{C}$ by  
\begin{align*}
(l, \ y_1)+(m, \ y_2) & = \big(l+m,   \ (-1)^{m+n}y_{1}+y_2\big),\\
(l, \ y_1)\circ(m, \ y_2) & = \big(l+m,  \ y_1+(-1)^{m+n}y_2\big).
\end{align*}

\noindent \textbf{Example 2}  Let $H=D_{2n}= \langle a,b\hspace{.1cm}|\hspace{.1cm} a^{2n}=b^2=e,  bab=a^{-1}\rangle$ and $I=\mathbb{Z}_p$ be trivial skew braces, where $D_{2n}$  and $\mathbb{Z}_p$ denotes dihederal group of order $4n$ and cyclic group of order $p$ respectively.  Define $\nu, \mu, \sigma : D_{2n} \rightarrow Aut(\mathbb{Z}_p)$ by $\nu_a(x)=\mu_a(x)=\sigma_a(x)=-x$ and $\nu_b(x)=\mu_b(x)=\sigma_b(x)=-x$. Using  \eqref{sb+} and \eqref{sbcirc},  we can define a skew brace structure on $D_{2n} \times \mathbb{Z}_p$ by  
\begin{align*}
(a^{i}b^{j}, y_1)+(a^{m} b^{n},  y_2) & = \big(a^{i}b^{j}a^{m} b^{n},   \ y_2+(-1)^{m+n}y_{1}\big),\\
(a^{i}b^{j}, y_1)\circ(a^{m} b^{n},  y_2) & = \big(a^{i}b^{j}a^{m} b^{n},  \ y_1+(-1)^{i+j}y_2\big).
\end{align*}
If we have trivial skew brace $H=D_n$, where $n$ is odd and $I$ be the same as above,  then we can define $\nu, \mu, \sigma : D_{n} \rightarrow Aut(\mathbb{Z}_p)$ by $\nu_a(x)=\mu_a(x)=\sigma_a(x)=x$ and $\nu_b(x)=\mu_b(x)=\sigma_b(x)=-x$.

\

\noindent \textbf{Example 3} Let $H=\mathbb{Z}_8$ be trivial skew brace and $I=\mathbb{Z}_3 \times \mathbb{Z}_2$  be skew brace of order $6$ defined in \cite{EM20} by the following operations
\begin{align*}
 (n,m)+(s,t) & =(n+2^{m}s, m+t),\\
(n,m) \circ (s,t) & =(2^{t}n+2^{m}s,m+t).
\end{align*}
We have $(I,+)=\langle  (1,0), (0,1)  \rangle    \cong S_3$ and $(I, \circ)=\langle (1,1) \rangle \cong \mathbb{Z}_6$. We take $\mu_{a}(n,m) = (n,m)$,  $ \sigma_{a}(n,m)=(n,m)^{-1}=(2n,m), $  $\nu_{a}(n,m)=(2n,m) $,  for all $(n,m) \in I$, where $a$  is a generator of $H$.  Hence the additive group of skew brace structure on $(H, I, \nu, \mu, \sigma)$ is just direct product of their respective additive groups and multiplicative group is given as follows
\begin{eqnarray*}
(a^k, (n,m)) \circ (a^l,(s,t))=(a^{k+l},  ((2n,m) \circ (2s,t)^{k}).
\end{eqnarray*} 
\

\noindent \textbf{Example 4} Let $H$ be brace of order $4$ defined in \cite{DB15} by $(H,+)=\mathbb{Z}_2 \times \mathbb{Z}_2$,  $(H, \circ)=\langle(0,1) \rangle \cong \mathbb{Z}_4$ and $I$ be a brace such that $(I,+)=\mathbb{Z}_4$,  $(I,\circ)=\langle 1,2 \rangle \cong \mathbb{Z}_2 \times \mathbb{Z}_2 $. Consider $\mu_x=\Id$ for all $x \in H$,  where $\Id$ denotes the identity mapping on $I$ and $\nu_{(0,1)}(x)=\sigma_{(0,1)}(x)=-x$. Then split semi- direct product of $I$ by $H$ is given by the skew brace with additive group as direct product of $H$ and $I$ and multiplicative group structure is given as follows
\begin{eqnarray*}
\big((0,1)^k, l) \circ ((0,1)^n,  m\big)=\big((0,1)^{k+n},  l + (-1)^{k}m+(-1)^{n}lm\big).
\end{eqnarray*}
\

\noindent \textbf{Example 5} Let $H$ be the  brace of order $8$ with additive group $\mathbb{Z}_8$ having Socle of order $2$ and $(H,\circ)=\langle 1,2 \rangle \cong \mathbb{Z}_4 \times \mathbb{Z}_2$  defined in \cite{DB15} and $I$ be brace of order $4$ as defined in Example 3. Then we have total $8$ different split semi-direct products of $I$ by $H$,  interestingly with $\mu_x=Id$ for all $x \in H$ in all cases. We list few cases
\begin{enumerate}
\item[(i)]
$\nu_1(x)=x^{-1}, \hspace{.1cm} \nu_2(x)=x \hspace{.1cm} and \hspace{.1cm} \sigma_h(x)=x \hspace{.1cm}for \hspace{.1cm}all \hspace{.1cm} x \in I, \hspace{.1cm} h \in H. $\\

\item[(ii)]
$\nu_1(x)=x^{-1}, \hspace{.1cm} \nu_2(x)=x^{-1} \hspace{.1cm} and \hspace{.1cm} \sigma_2(2)=3 ,\hspace{.1cm} \sigma_1(x)=x \hspace{.1cm}  for \hspace{.1cm}all \hspace{.1cm} x \in I.$ 

\end{enumerate}

\section{general extensions of skew braces}

Let $(H, +,  \circ)$ and $(I, + , \circ) $ be two skew braces,  $\mathcal{E} := 0 \to I \stackrel{i}{\to}  E \stackrel{\pi}{\to} H \to 0$ be an extension of $H$ by $I$. and let $s : H \rightarrow E$ be  an st-section of $\mathcal{E}$.  Corresonding to $s$, consider the pair $(\beta, \tau)$, where $\beta$ and $\tau$ are defined as  
\begin{align}
\beta(h_1, h_2) &:= - s(h_1 + h_2) + s(h_1) + s(h_2),\label{cocycle1 sb}\\
\tau(h_1, h_2) &:= s(h_1 \circ h_2)^{-1} \circ s(h_1) \circ s(h_2).\label{cocycle2 sb}
\end{align} 
It is easy to see that $ \nu, \mu $ and $ \sigma$,  defined in \eqref{actions},  need not be homomorphisms in general, but they satisfy the following identities

\begin{align}
\nu_{h_1 \circ h_2}&=\nu_{h_1}  \nu_{h_2} \lambda^{-1}_{\tau(h_1, h_2)},\label{action1 }\\
 \mu_{h_1 + h_2} &= i^{+}_{-\beta(h_1, h_2)} \mu_{h_2} \mu_{h_1}, \label{action2 }\\
\sigma_{h_1 \circ h_2}&=i^{\circ}_{\tau(h_1, h_2)^{-1}} \sigma_{h_2} \sigma_{h_1}, \label{action3}
 \end{align}
where 
 
\begin{align}
i^{+}_y(z)&:=y+z-y, \\
i^{\circ}_y(z)&:=y \circ z \circ y^{-1},
 \end{align}
are inner automorphisms of $(H,+)$ and $(H, \circ)$, respectively, and $\beta$ and $\tau$ are as defined above in \eqref{cocycle1 sb} and \eqref{cocycle2 sb}.
 
Let $N$ be the smallest normal subgroup of $\Aut(I, +)$ generated by the set $\{ \lambda_y \hspace{.1cm}| \hspace{.1cm} y \in I\}$. Let $ \Inn(I , +)$ and $ \Inn(I, \circ)$ be the inner automorphism subgroups  of $\Aut(I, +)$ and $\Aut(I, \circ)$ respectively. Then we have the maps  $\bar{\nu} : (H, \circ ) \rightarrow \Aut(I, +)/N$, $\bar{\mu} : (H, +) \rightarrow \Aut(I, )/ \Inn(I, +)$ and $\bar{\sigma} : (H, \circ) \rightarrow \Aut(I, \circ)/ \Inn(I, \circ)$ defined by $\nu , \mu$ and $\sigma$ composing with natural projections respectively.  We call the triplet  $\chi:=(\nu, \mu, \sigma)$ satisfying (\ref{action1 }), (\ref{action2 }), and (\ref{action3}) an action of $H$ on $I$ and corresponding triplet $\bar{\chi}:=(\bar{\nu},\bar{\mu}, \bar{\sigma})$ will be called a coupling from $H$ to $I$ corresponding to $(\nu,\mu, \sigma)$.  Let $\chi =(\nu, \mu, \sigma)$ and $\chi^\prime=(\nu^\prime, \mu^\prime, \sigma^\prime)$ be two actions. Then we say that $\bar{\chi}^\prime\approx\bar{\chi}$ if there exists a map $\theta: H \rightarrow I$ such that $\theta(0)=0$ and $\nu^{\prime}_{h}=\nu_{h} \lambda_{\theta(h)}$,  $\mu^\prime_{h}=i^+_{\nu_h(-\theta(h))} \mu_h$ and $\sigma^\prime=i^{\circ}_{\theta(h)^{-1}} \sigma_{h}.$

{\bf{Remark:}} If $\bar{\chi}^\prime\approx\bar{\chi}$ then $\bar{\chi}^\prime =\bar{\chi}$ but converse need not be true.  Note that the map $\theta : H \rightarrow I$ mentioned above need not be unique. 

With this setting, we have
\begin{prop}\label{well-def-act-coc}
Let $0 \to I \stackrel{}{\to} E \stackrel{\pi}{\rightarrow}  H \to 1$ be an extension of a left skew brace $I$ by $H$. Then the following hold:
 
(1) The coupling $\bar{\chi}$ is independent of the choice of an st-section. 
 
(2) Equivalent extensions have the same coupling.

\end{prop}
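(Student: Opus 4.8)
The strategy is to track how the triple $(\nu,\mu,\sigma)$ defined in \eqref{actions} changes when we vary the data, and to show that in each case the variation is absorbed by the normal subgroups $N$, $\Inn(I,+)$, $\Inn(I,\circ)$ appearing in the definition of the coupling. For part (1), I would fix the extension $\mathcal{E}$ and take two st-sections $s$ and $s'$. Since $\pi(s(h)) = \pi(s'(h)) = h$, for each $h \in H$ there is a unique element $\theta(h) \in I$ with $s'(h) = s(h) + \theta(h)$ (identifying $I$ with its image under $i$); since $s(0)=s'(0)=0$ we get $\theta(0)=0$. The heart of the computation is then to substitute $s'(h) = s(h) + \theta(h)$ into the three defining formulas
$$\nu'_h(y) = -s'(h) + (s'(h)\circ y), \quad \mu'_h(y) = -s'(h) + y + s'(h), \quad \sigma'_h(y) = s'(h)^{-1}\circ y \circ s'(h),$$
and rewrite the result in terms of $\nu_h,\mu_h,\sigma_h$ and the maps $\lambda_{\theta(h)}$, $i^+_{\nu_h(-\theta(h))}$, $i^\circ_{\theta(h)^{-1}}$. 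I expect to recover exactly the relations $\nu'_h = \nu_h \lambda_{\theta(h)}$, $\mu'_h = i^+_{\nu_h(-\theta(h))}\mu_h$, $\sigma'_h = i^\circ_{\theta(h)^{-1}}\sigma_h$ displayed just before the proposition, i.e. $\bar\chi' \approx \bar\chi$. By the Remark, $\approx$ implies equality of couplings, so $\bar\chi' = \bar\chi$. The key manipulations use Lemma 2.3 (the identities $a+b = a\circ\lambda_a^{-1}(b)$ and $a\circ b = a+\lambda_a(b)$) together with the fact that for $y \in I$, $\lambda_{s(h)}$ restricted to $I$ is an automorphism one can compare with $\nu_h$, plus the basic brace compatibility \eqref{bcomp}; the routine but slightly delicate point is keeping the additive inverses and the order of composition straight, since $\mu$ and $\sigma$ are \emph{anti}-homomorphisms while $\nu$ is a homomorphism.

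For part (2), suppose $\mathcal{E}: 0 \to I \to E \xrightarrow{\pi} H \to 0$ and $\mathcal{E}': 0 \to I \to E' \xrightarrow{\pi'} H \to 0$ are equivalent, witnessed by a skew brace isomorphism $\psi: E \to E'$ with $\psi|_I = \Id_I$ and $\pi'\circ\psi = \pi$. Pick an st-section $s$ of $\mathcal{E}$; then $s' := \psi\circ s$ is an st-section of $\mathcal{E}'$, because $\pi'(\psi(s(h))) = \pi(s(h)) = h$ and $\psi(s(0)) = \psi(0) = 0$. Computing the triple $(\nu',\mu',\sigma')$ attached to $(\mathcal{E}', s')$ and using that $\psi$ is additive, multiplicative, and fixes $I$ pointwise, one gets for $y \in I$:
$$\nu'_h(y) = -\psi(s(h)) + \psi(s(h))\circ y = \psi\bigl(-s(h) + s(h)\circ y\bigr) = \psi(\nu_h(y)) = \nu_h(y),$$
and identically $\mu'_h = \mu_h$, $\sigma'_h = \sigma_h$. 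Hence the actions themselves agree, so a fortiori the couplings agree, and by part (1) this is independent of the choice of $s$.

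The main obstacle is the bookkeeping in part (1): one must carefully expand $s'(h)\circ y = (s(h)+\theta(h))\circ y$ and $s'(h)^{-1}$ (the inverse in $(E,\circ)$ of $s(h)+\theta(h)$, which is \emph{not} simply $-s(h)-\theta(h)$) using Lemma 2.3 and \eqref{bcomp}, and verify that the conjugation/twist factors that emerge are precisely $\lambda_{\theta(h)}$, $i^+_{\nu_h(-\theta(h))}$, $i^\circ_{\theta(h)^{-1}}$ — in particular that they lie in $N$, $\Inn(I,+)$, $\Inn(I,\circ)$ respectively, which is what makes them die in the quotients defining $\bar\chi$. Everything else is formal: part (2) is a direct functoriality check, and the passage from $\approx$ to equality of couplings is granted by the Remark preceding the proposition.
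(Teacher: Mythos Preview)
Your plan is essentially the same as the paper's, with one parametrization choice that creates a small wrinkle. In part (1) the paper writes the difference of sections \emph{multiplicatively}, $s_2(h)=s_1(h)\circ\theta(h)$, rather than additively as you do. With that choice the three relations $\nu'_h=\nu_h\lambda_{\theta(h)}$, $\mu'_h=i^+_{\nu_h(-\theta(h))}\mu_h$, $\sigma'_h=i^\circ_{\theta(h)^{-1}}\sigma_h$ drop out almost immediately: $\nu'_h=\lambda_{s_1(h)\circ\theta(h)}=\lambda_{s_1(h)}\lambda_{\theta(h)}=\nu_h\lambda_{\theta(h)}$, and $\sigma'_h$ is a one-line conjugation. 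With your additive $\theta$ (so $\theta_{\mathrm{add}}(h)=\nu_h(\theta_{\mathrm{mult}}(h))$ via Lemma~2.2), the formula you say you ``expect'', namely $\nu'_h=\nu_h\lambda_{\theta(h)}$, is not what you will actually get: you will instead find $\nu'_h=\nu_h\lambda_{\nu_h^{-1}(\theta(h))}$, and the $\sigma'$ computation forces you through exactly the inverse $(s(h)+\theta(h))^{-1}$ in $(E,\circ)$ that you flagged as delicate. None of this is fatal---the correction factors still land in $N$, $\Inn(I,+)$, $\Inn(I,\circ)$ and you still get $\bar\chi'\approx\bar\chi$---but the displayed relations defining $\approx$ are calibrated to the multiplicative $\theta$, so switching to that parametrization will make your bookkeeping match the target on the nose and spare you the detour. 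Part (2) is handled exactly as in the paper.
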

\begin{proof}
(1) Let $s_1$ and $s_2$ be two st-sections of $\pi$. We know that two sections differ by an element of $I$, hence for an element $h \in H$, there exist $y_h \in I$ such that $s_2(h)=s_1(h)\circ y_h$.  Let $\chi=(\nu, \mu ,  \sigma)$ and $\chi^\prime=(\nu^\prime, \mu^\prime, \sigma^\prime)$ be actions corresponding to $s_1$ and $s_2$ respectively. Define $\theta: H \rightarrow I$ be  $\theta(h)=y_h$.  It can be easily seen that $\bar{\chi}^\prime \approx \bar{\chi}$  using $\theta$ as a required map.
 
(2) Let $E$ and $E^\prime$ be two equivalent extensions. Then there exist a skew brace homomorphism $\phi: E^\prime \rightarrow E$ such that the following diagram commutes
$$\begin{CD}
 0 @>>> I @>>> E^\prime @>{{\pi^\prime} }>> H  @>>> 0\\
 &&  @V{\text{Id}}VV @V{\phi}VV @ VV{ \text{Id}}V \\
 0 @>>> I @>>> E @>{{\pi} }>> H @>>> 0.
\end{CD}$$
Let $s : H \rightarrow E^\prime$ be any st-section of  the extension $E^\prime$.  Then $\phi s : H \rightarrow E$ is a st-section of extension the $E$. Let $\chi=(\nu, \mu, \sigma)$ be actions of $E$ corresponding to $\phi s$ and $\chi^\prime=(\nu^\prime, \mu^\prime, \sigma^\prime)$ be actions of $E^\prime$ corresponding to $ s$, respectively.  Then we have $\nu=\nu^\prime$, $\mu = \mu^\prime$ and $\sigma=\sigma^\prime$.  Hence $\bar{\chi} \approx \bar{\chi^\prime}$ by taking $\theta: H \rightarrow I$ to be $\theta(h)=0$ for all $h \in H$.  As we have already proved that coupling is independent of an st-section,  so this holds for all st-sections of $E$ and $E^\prime$.
\hfill $\Box$

\end{proof}
Let $\Ext(H,I)$ denote the set of  equivalence classes of all  extensions of $H$ by $I$.  Equivalence class of an extension $\mathcal{E} : 0 \to I \to E \to H \to 0$ is denoted by $[\mathcal{E}]$. As a consequence of the preceding proposition, it follows that each equivalence class of extension of $H$ by $I$ admits a unique coupling $\bar{\chi}=(\bar{\nu},\bar{\mu}, \bar{\sigma})$ corresponding to actions $\chi=(\nu, \mu, \sigma)$ of $H$ on $I$.  Let $\Ext_{(\bar{\nu},\bar{\mu}, \bar{\sigma})}(H, I)$ denote the equivalence class of those extensions of $H$ by $I$ whose corresponding coupling is $(\bar{\nu},\bar{\mu}, \bar{\sigma})$.  We can easily establish

\begin{cor}\label{cor 1}
 $\Ext(H, I) = \bigsqcup_{(\bar{\nu},\bar{\mu}, \bar{\sigma})} \Ext_{(\bar{\nu},\bar{\mu}, \bar{\sigma})}(H, I)$.
 \end{cor}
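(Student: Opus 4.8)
The plan is to deduce this decomposition purely formally from Proposition \ref{well-def-act-coc}. First I would observe that every extension $\mathcal{E} : 0 \to I \to E \to H \to 0$ admits at least one st-section: since $\pi$ is surjective one chooses a preimage $s(h)$ of each $h \in H$ with $s(0) = 0$, obtaining a set map with $\pi s = \Id_H$. From such an $s$ one builds the triple $(\nu, \mu, \sigma)$ via \eqref{actions}, which satisfies the identities \eqref{action1 }, \eqref{action2 }, and \eqref{action3}, and hence a coupling $\bar{\chi} = (\bar{\nu}, \bar{\mu}, \bar{\sigma})$ by composition with the natural projections $\Aut(I,+) \to \Aut(I,+)/N$, $\Aut(I,+) \to \Aut(I,+)/\Inn(I,+)$ and $\Aut(I,\circ) \to \Aut(I,\circ)/\Inn(I,\circ)$. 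Thus the assignment sending an extension to a coupling is defined on every extension.

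Next I would check that this assignment descends to a well-defined map $c : \Ext(H, I) \to \{\text{couplings from } H \text{ to } I\}$. By Proposition \ref{well-def-act-coc}(1) the coupling does not depend on the chosen st-section (using the Remark that $\bar{\chi}' \approx \bar{\chi}$ forces $\bar{\chi}' = \bar{\chi}$), and by Proposition \ref{well-def-act-coc}(2) equivalent extensions give the same coupling. Hence $c([\mathcal{E}]) := \bar{\chi}$ is well-defined on equivalence classes, and $\Ext_{(\bar{\nu},\bar{\mu},\bar{\sigma})}(H, I)$ is by its very definition the fiber $c^{-1}\big((\bar{\nu}, \bar{\mu}, \bar{\sigma})\big)$.

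Finally, since the nonempty fibers of any function form a partition of its domain, $\Ext(H, I)$ is the disjoint union of these fibers, which is exactly the asserted decomposition $\Ext(H, I) = \bigsqcup_{(\bar{\nu},\bar{\mu}, \bar{\sigma})} \Ext_{(\bar{\nu},\bar{\mu}, \bar{\sigma})}(H, I)$, the index running over the couplings that actually arise (empty classes can be discarded, or equivalently included as empty summands). The only point requiring any care — and the closest thing to an obstacle — is the well-definedness of $c$, namely that changing the st-section or passing to an equivalent extension leaves the coupling literally unchanged rather than merely changed up to $\approx$; this is precisely the content of Proposition \ref{well-def-act-coc} combined with the Remark, so no new computation is needed here.
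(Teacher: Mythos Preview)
Your proposal is correct and is exactly the argument the paper intends: the corollary is stated without proof as an immediate consequence of Proposition \ref{well-def-act-coc} and the preceding Remark, and you have simply spelled out that the well-defined map $[\mathcal{E}] \mapsto \bar{\chi}$ partitions $\Ext(H,I)$ into its fibers.
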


\begin{prop}\label{prop 2}
Let $\mathcal{E}$ be a extension of $H$ by $I$. Then the following hold \\

\begin{itemize}
\item[1)]
Let $s$ be an st-section of $\mathcal{E}$. Then the pair $(\beta, \tau)$ corresponding to $s$ together with action defined in \eqref{actions} satisfies

\begin{equation}\label{cocycle 1}
\beta(h_1, h_2+h_3)+\beta(h_2, h_3)-\beta(h_1+h_2, h_3)-\mu_{h_3}(\beta(h_1, h_2))=0,
\end{equation}
and 
\begin{equation}\label{cocycle 2}
\tau(h_1, h_2 \circ h_3) \circ \tau(h_2, h_3) \circ \tau(h_1 \circ h_2, h_3)^{-1} \circ (\sigma_{h_3}(\tau(h_1, h_2)))^{-1}=0.
\end{equation}

\item[2)]
Let $s_1$ and $s_2$ be two st-sections of $\mathcal{E}$, and let $(\beta_1, \tau_1)$ and $(\beta_2, \tau_2)$ be the pairs corresponding to $s_1$ and $s_2$, respectively. Let $(\prescript{}{1}{\nu}, \prescript{}{1}{\mu}, \prescript{}{1}{\sigma})$  and $(\prescript{}{2}{\nu}, \prescript{}{2}{\mu}, \prescript{}{2}{\sigma})$ be actions corresponding to $s_1$ and $s_2$, respectively. Then there exists a map $\theta : H \rightarrow I$ such that 
$$
s_2(h)=s_1(h) \circ \theta(h)=s_1(h) + \prescript{}{1}{\nu}_{h}(\theta(h)),
$$ 
\begin{equation}\label{equi 1}
\prescript{}{1}{\nu}_{h_1+h_2}(-\theta(h_1+h_2))+\beta_1(h_1, h_2)+ \prescript{}{1}{\mu}_{h_2}(\prescript{}{1}{\nu}_{h_1}(\theta(h_1)))+\prescript{}{1}{\nu}_{h_2}(\theta(h_2))=\beta_2(h_1, h_2),
\end{equation}
and
\begin{equation}\label{equi 2}
\theta(h_1 \circ h_2)^{-1} \circ \tau_1(h_1, h_2) \circ \prescript{}{1}{\sigma}_{h_2}(\theta(h_1)) \circ \theta(h_2)=\tau_2(h_1, h_2),
\end{equation}
for all $h, h_1, h_2 \text{ and } h_3 \in H$.\\

\item[3)]
Let $\mathcal{E}_1$ and $\mathcal{E}_2$ be two equivalent extensions of $H$ by $I$, and let $s_1$ and  $s_2$ be st-sections of $\mathcal{E}_1$ and $\mathcal{E}_2$, respectively. Let $(\beta_1, \tau_1)$ and $(\beta_2, \tau_2)$ be the pairs corresponding to $s_1$ and $s_2$, respectively. Then there exists a map $\theta : H \rightarrow I$ satisfying \eqref{equi 1} and \eqref{equi 2}.

\end{itemize}

\end{prop}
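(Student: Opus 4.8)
The plan is to treat the three parts in order, since part~2) is really the computational heart and parts~1) and~3) follow from it by specialisation. For part~1), I would start from the two defining relations for the cocycle pair, namely $s(h_1+h_2) \circ \beta(h_1,h_2) = s(h_1)+s(h_2)$ rewritten via Lemma (the $a+b = a\circ\lambda_a^{-1}(b)$ identity) and $s(h_1\circ h_2)\circ\tau(h_1,h_2) = s(h_1)\circ s(h_2)$, and then expand the associative products $s(h_1)+s(h_2)+s(h_3)$ and $s(h_1)\circ s(h_2)\circ s(h_3)$ in two ways. Associativity in $(E,+)$ gives one identity relating the $\beta$'s with the conjugation action $\mu$ appearing precisely because to move $\beta(h_1,h_2)$ past $s(h_3)$ one conjugates, yielding \eqref{cocycle 1}; associativity in $(E,\circ)$ gives the multiplicative identity \eqref{cocycle 2}, with $\sigma_{h_3}$ entering for the same reason on the $\circ$-side. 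These are the skew-brace analogues of the classical $2$-cocycle conditions and the manipulation is bookkeeping once one is careful that $i$ identifies $I$ with its image and that $s(0)=0$.

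For part~2), the first observation is that two st-sections of the same extension differ by a map into $I$: since $\pi(s_2(h)) = \pi(s_1(h)) = h$, the element $s_1(h)^{-1}\circ s_2(h)$ lies in $\ker\pi = I$, so we may define $\theta(h)$ by $s_2(h) = s_1(h)\circ\theta(h)$, and $\theta(0)=0$ because both sections send $0$ to $0$. Using Lemma again, $s_1(h)\circ\theta(h) = s_1(h) + \prescript{}{1}{\nu}_h(\theta(h))$, which is the displayed reformulation; here one must check that $\lambda_{s_1(h)}^{-1}$ applied appropriately reproduces $\prescript{}{1}{\nu}_h$ from its definition in \eqref{actions}. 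Then I would substitute $s_2 = s_1 \circ \theta$ into the defining formulas \eqref{cocycle1 sb} and \eqref{cocycle2 sb} for $(\beta_2,\tau_2)$ and expand. For \eqref{equi 2} this is a direct computation in $(E,\circ)$: write $\tau_2(h_1,h_2) = s_2(h_1\circ h_2)^{-1}\circ s_2(h_1)\circ s_2(h_2)$, insert $s_2(h_i) = s_1(h_i)\circ\theta(h_i)$, and slide the $\theta$'s toward the middle using the conjugation that defines $\prescript{}{1}{\sigma}$, picking up $\tau_1(h_1,h_2)$ in the process. For \eqref{equi 1} one does the analogous manipulation in $(E,+)$, converting $\circ$ to $+$ via Lemma wherever $\theta$ appears, and the terms $\prescript{}{1}{\mu}_{h_2}$ and the inner translation $\prescript{}{1}{\nu}_{h_1+h_2}(-\theta(h_1+h_2))$ appear exactly because of how $\mu$ is defined by conjugation and how $\beta$ interacts with the section.

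Part~3) is then immediate: if $\mathcal{E}_1$ and $\mathcal{E}_2$ are equivalent via $\phi\colon E_1\to E_2$ with $\phi|_I = \Id$ and $\pi_2\phi = \pi_1$, then $\phi\circ s_1$ is an st-section of $\mathcal{E}_2$, and since $\phi$ restricts to the identity on $I$ the cocycle pair of $\phi\circ s_1$ computed in $E_2$ equals $(\beta_1,\tau_1)$ (and likewise the actions agree, as already recorded in the proof of Proposition~\ref{well-def-act-coc}(2)); now apply part~2) to the two st-sections $\phi\circ s_1$ and $s_2$ of the single extension $\mathcal{E}_2$ to produce the desired $\theta$ satisfying \eqref{equi 1} and \eqref{equi 2}.

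The main obstacle I anticipate is purely organisational rather than conceptual: in the skew-brace setting the two group operations interact through $\lambda$, so every time a $\theta$-term is moved from a $\circ$-product into a $+$-product (or past a section element) one must apply Lemma and the definitions \eqref{actions} in exactly the right order, and it is easy to get the placement of $\prescript{}{1}{\nu}$, $\prescript{}{1}{\mu}$, $\prescript{}{1}{\sigma}$ or the signs of the $\theta$-arguments wrong. The cleanest route is to fix once and for all the substitution rules ``$a\circ b = a + \prescript{}{}{\nu}\text{-twist of }b$'' and ``conjugation by $s(h)$ in $+$ is $\mu_h$, in $\circ$ is $\sigma_h$'', and then grind each identity with those rules, so that \eqref{equi 1} and \eqref{equi 2} drop out after collecting terms.
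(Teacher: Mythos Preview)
Your proposal is correct and follows essentially the same approach as the paper: the paper dismisses parts~(1) and~(2) as following ``directly from definitions'' (your outlined computations for these parts are precisely what one must do to unpack that phrase), and for part~(3) the paper does exactly what you describe---push $s_1$ through the equivalence $\phi$ to obtain an st-section $\phi s_1$ of $\mathcal{E}_2$ with the same cocycle pair $(\beta_1,\tau_1)$, and then apply part~(2) to the two sections $\phi s_1$ and $s_2$ of $\mathcal{E}_2$.
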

\begin{proof}
It is easy to see that (1) and (2) follows directly from definitions. Now we will prove (3). 
Since $\mathcal{E}_1 := 0 \to I \stackrel{}{\to}  E_1 \stackrel{\pi_1}{\to} H \to 0$ and $\mathcal{E}_2:= 0 \to I \stackrel{i}{\to}  E_2 \stackrel{\pi_2}{\to} H \to 0$ are two equivalent extensions, there exists an isomorphism $\phi : E_1 \rightarrow E_2$ such that the following diagram commutes
$$
\begin{CD}
 0 @>>> I @>>> E_1 @>{{\pi_1} }>> H  @>>> 0\\
 &&  @V{\text{Id}}VV @V{\phi}VV @ VV{ \text{Id}}V \\
 0 @>>> I @>>> E_2 @>{{\pi_2} }>> H @>>> 0.
\end{CD}
$$

Let $s_1$ be an st-section of $\mathcal{E}_1$. Then $\phi s_1$ is an st-section of $\mathcal{E}_2$.  Let $(\beta_1, \tau_1)$ and $(\beta^\prime, \tau^\prime)$ be the pairs corresponding to $s_1$ and $\phi s_1$, respectively. By the commutativity of the above diagram, we have $\phi(y)=y$, for all $y \in I$, hence $\beta_1=\beta^\prime$ and $\tau_1=\tau^\prime$. Let $s_2$  be an st-section of $\mathcal{E}_2$ and $(\beta_2, \tau_2)$ be the pair corresponding to $s_2$. Using \eqref{equi 1}, \eqref{equi 2} for $s_2$ and $\phi s_1$, we get the desired result. 
\hfill $\Box$

\end{proof}
\begin{defn}
Let $\chi=(\nu, \mu, \sigma)$ be an action of $H$ on $I$, and $\beta, \tau : H \times H \rightarrow I$ such that $\beta$ and $\tau$ together with $\chi$ satisfies \eqref{cocycle 1} and \eqref{cocycle 2}, respectively. Then the ordered pair $(\beta, \tau)$ is a $2$-cocycle with action $\chi$. 

{\bf{Remark:}} Note that the pair $(\beta, \tau)$ defined by the \eqref{cocycle1 sb} and \eqref{cocycle2 sb} is a $2$-cocycle corresponding to st-section $s$ with action defined by \eqref{actions}. 
\end{defn}

Let  $\mathcal{E}:= 0 \to I \stackrel{}{\to}  E\stackrel{\pi}{\to} H \to 0$ be an skew brace extension of $H$ by $I$. Let $s : H \rightarrow E$ be an st-section of  $\mathcal{E}$. Due to the compatiblity condition of a left skew brace $E$, we have
$$
(s(h_1) \circ y_1) \circ (s(h_2) \circ y_2+s(h_3) \circ y_3)=s(h_1) \circ y_1) \circ (s(h_2) \circ y_2)-(s(h_1) \circ y_1)+(s(h_1) \circ y_1) \circ (s(h_3) \circ y_3).
$$ 
From the above equality we see that the triple $(\nu, \mu, \sigma)$ (defined  in \eqref{actions}) together with $(\beta, \tau)$ satisfy
\begin{eqnarray}\label{parent relation}
 \nu_{h_1 \circ (h_2+h_3)}(\tau(h_1, h_2+h_3) \circ \sigma_{h_2+h_3}(y_1) \circ \nu^{-1}_{h_2+h_3}(\beta(h_2, h_3) +\mu_{h_3}(\nu_{h_2}(y_2))+\nu_{h_3}(y_3))) = A,
\end{eqnarray}
where
\begin{align*}
A = &\beta (h_1 \circ h_3-h_1, h_1 \circ h_3)+\mu_{h_1 \circ h_3}(\beta(h_1 \circ h_2, -h_1)+ \mu_{-h_1}(\nu_{h_1 \circ h_2}(\tau(h_1, h_2) \circ \sigma_{h_2}(y_1) \circ y_2))\\
& -\nu_{h_1}(y_1)- \beta(h_1, -h_1)) +\nu_{h_1 \circ h_3}(\tau(h_1, h_3) \circ \sigma_{h_3}(y_1) \circ y_3),
\end{align*}
for all $h_1, h_2, h_3 \in H$ and $y_1, y_2, y_3 \in I.$

{\bf{Remark:}} Note that if $I$ is an abelian group equipped with trivial skew brace structure, then the above condition will simplify to the condition defined for good triplet of actions in \cite[Pg.5]{NMY}.

 For $\alpha=(\bar{\nu},\bar{\mu}, \bar{\sigma})$ a coupling from $H$ to $I$.  Define
\begin{align*}\label{stZ^2}
\mathcal{Z}^2_{\alpha}(H, I):=\Bigg\{(\chi,\beta, \tau)\hspace{.1cm} \Big| \hspace{.1cm} \hspace{.1cm}\substack{\chi \mbox{ is an action of } H \mbox{ on } I, \hspace{.1cm}  \bar{\chi} \approx \alpha, \hspace{.1cm}  \mbox{and} \hspace{.1cm}(\beta,\tau)\hspace{.1cm} \mbox{ia a 2-cocycle} \hspace{.1cm}\mbox{with action}  \\ \hspace{.1cm} \chi  \hspace{.1cm} \mbox{and satisfy}  \hspace{.1cm} \eqref{parent relation}
  }\Bigg\}.
\end{align*}
Let $(\chi_1, \beta_1, \tau_1)$ and $(\chi_2 \beta_2, \tau_2)$ be two elements of  $\mathcal{Z}^2_{\alpha}(H, I)$, where $\chi_1=(\prescript{}{1}{\nu}, \prescript{}{1}{\mu}, \prescript{}{1}{\sigma})$ and $\chi_2=(\prescript{}{2}{\nu}, \prescript{}{2}{\mu}, \prescript{}{2}{\sigma})$.  We say that  $(\chi_1, \beta_1, \tau_1)$ $\sim$ $(\chi_2, \beta_2, \tau_2)$ if there exits a map $\theta: H \rightarrow I$ such that $\bar{\chi_2} \approx \bar{\chi_1} $ by $\theta$ and $\beta_1$, $\beta_2$ satisfy (\ref{equi 1}), $\tau_1, \tau_2$ satisfy (\ref{equi 2}) with respect to $\theta$. \\

\begin{prop}
The  relation  `$\sim$' defined in above para is an equivalence relation.
\end{prop}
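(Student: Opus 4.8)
The plan is to check reflexivity, symmetry and transitivity directly on $\mathcal{Z}^2_{\alpha}(H,I)$, the guiding idea being that the normalized maps $\theta:H\to I$ (those with $\theta(0)=0$) form a group under the pointwise circle product $(\theta_1\ast\theta_2)(h)=\theta_1(h)\circ\theta_2(h)$, and that $\sim$ is exactly the orbit relation of the ``action'' of this group on $\mathcal{Z}^2_{\alpha}(H,I)$ determined by the formulas defining $\approx$ together with \eqref{equi 1} and \eqref{equi 2}. For \textbf{reflexivity} one takes $\theta\equiv 0$: since $i^{+}_0=\Id$, $i^{\circ}_0=\Id$ and $\lambda_0=\Id$, the triple $(\nu,\mu,\sigma)$ is returned unchanged, so $\bar\chi\approx\bar\chi$ via $\theta$; and since each $\nu_h$ and $\mu_h$ fixes $0$, both \eqref{equi 1} and \eqref{equi 2} degenerate to $\beta=\beta$ and $\tau=\tau$.

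For \textbf{symmetry}, suppose $\theta$ witnesses $(\chi_1,\beta_1,\tau_1)\sim(\chi_2,\beta_2,\tau_2)$ and set $\theta'(h):=\theta(h)^{-1}$, the inverse in $(I,\circ)$; then $\theta'(0)=0$. Using that $\lambda$ is a homomorphism $(I,\circ)\to\Aut(I,+)$, that $i^{+}$ and $i^{\circ}$ are homomorphisms, and the identity $\lambda_a(a^{-1})=-a$ (immediate from $\lambda_a(b)=-a+(a\circ b)$ and $a\circ a^{-1}=0$), one verifies $\prescript{}{1}{\nu}_h=\prescript{}{2}{\nu}_h\lambda_{\theta'(h)}$, $\prescript{}{1}{\mu}_h=i^{+}_{\prescript{}{2}{\nu}_h(-\theta'(h))}\prescript{}{2}{\mu}_h$ and $\prescript{}{1}{\sigma}_h=i^{\circ}_{\theta'(h)^{-1}}\prescript{}{2}{\sigma}_h$, i.e. $\bar{\chi_1}\approx\bar{\chi_2}$ via $\theta'$. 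Rearranging \eqref{equi 1} and \eqref{equi 2} then shows that $(\beta_2,\tau_2)$ and $(\beta_1,\tau_1)$ satisfy the same relations with respect to $\theta'$, where to move the $\theta$-terms across the equation one uses once more $\lambda_a(a^{-1})=-a$ together with the identities $a+b=a\circ\lambda^{-1}_a(b)$ and $a\circ b=a+\lambda_a(b)$ to translate between $+$ and $\circ$.

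For \textbf{transitivity}, let $\theta$ witness $(\chi_1,\beta_1,\tau_1)\sim(\chi_2,\beta_2,\tau_2)$ and $\theta'$ witness $(\chi_2,\beta_2,\tau_2)\sim(\chi_3,\beta_3,\tau_3)$, and set $\theta''(h):=\theta(h)\circ\theta'(h)$, so $\theta''(0)=0$. Since $\lambda$, $i^{+}$ and $i^{\circ}$ are homomorphisms and $\prescript{}{2}{\nu}_h=\prescript{}{1}{\nu}_h\lambda_{\theta(h)}$, one gets $\prescript{}{3}{\nu}_h=\prescript{}{1}{\nu}_h\lambda_{\theta''(h)}$, $\prescript{}{3}{\mu}_h=i^{+}_{\prescript{}{1}{\nu}_h(-\theta''(h))}\prescript{}{1}{\mu}_h$ and $\prescript{}{3}{\sigma}_h=i^{\circ}_{\theta''(h)^{-1}}\prescript{}{1}{\sigma}_h$; the key observation here is $\prescript{}{1}{\nu}_h(\theta''(h))=\prescript{}{1}{\nu}_h(\theta(h))+\prescript{}{2}{\nu}_h(\theta'(h))$, coming from $\theta\circ\theta'=\theta+\lambda_{\theta}(\theta')$ and $\prescript{}{1}{\nu}_h\lambda_{\theta(h)}=\prescript{}{2}{\nu}_h$, and correspondingly $\prescript{}{1}{\nu}_h(-\theta''(h))=-\prescript{}{2}{\nu}_h(\theta'(h))-\prescript{}{1}{\nu}_h(\theta(h))$. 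Substituting the instance of \eqref{equi 1} for $\theta$ into the instance for $\theta'$, rewriting $\prescript{}{2}{\nu},\prescript{}{2}{\mu}$ in terms of $\prescript{}{1}{\nu},\prescript{}{1}{\mu}$ and $\theta$, and inserting these two expansions produces \eqref{equi 1} for $(\beta_1,\tau_1)$ and $(\beta_3,\tau_3)$ relative to $\theta''$; the identical scheme applied to \eqref{equi 2}, this time with the circle operation, gives the $\tau$-relation. Hence $(\chi_1,\beta_1,\tau_1)\sim(\chi_3,\beta_3,\tau_3)$.

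I expect the only genuine difficulty to be the bookkeeping in the symmetry and, above all, transitivity verifications of \eqref{equi 1} and \eqref{equi 2}: the group $(I,+)$ need not be abelian and $\nu_h,\mu_h,\sigma_h$ are not skew-brace automorphisms, so every substitution has to be paired with the correct $+\leftrightarrow\circ$ conversion and with the identities \eqref{action1 }, \eqref{action2 }, \eqref{action3}; keeping the noncommutative orderings straight is essentially all the proof amounts to.
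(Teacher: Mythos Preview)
Your proposal is correct and follows essentially the same route as the paper: reflexivity via $\theta\equiv 0$, symmetry via $\theta'(h)=\theta(h)^{-1}$, and transitivity via $\theta''(h)=\theta(h)\circ\theta'(h)$, with the same pivotal identity $\prescript{}{1}{\nu}_h(\theta(h))+\prescript{}{2}{\nu}_h(\theta'(h))=\prescript{}{1}{\nu}_h(\theta(h)\circ\theta'(h))$ driving the transitivity computation. Your framing of $\sim$ as an orbit relation under the pointwise $\circ$-group of normalized maps $H\to I$ is a helpful conceptual packaging that the paper leaves implicit, but the underlying verifications coincide.
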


\begin{proof}
Reflexivity is easy to see by taking $\theta:H\rightarrow I$ given by $\theta(h)=0$ for all $h \in H$. Now we will show that the above relation is symmetric. Let $ (\chi_1, \beta_1, \tau_1)$ $\sim$ $(\chi_2, \beta_2, \tau_2)$, where $\chi_i=(\prescript{}{i}{\nu}, \prescript{}{i}{\mu}, \prescript{}{i}{\sigma})$ for $i=1,2$.  We know that there exist a map  $\theta : H \rightarrow I$ such that $\bar{\chi_1} \approx\bar{\chi_2}$ by $\theta$ and  \eqref{equi 1}, \eqref{equi 2}.  Define $\psi:H \rightarrow I$ by $\psi(h)=\theta(h)^{-1}$.  Then we have $\prescript{}{1}{\nu}_h=\prescript{}{2}{\nu}_h \lambda_{\psi(h)}$, and hence $\prescript{}{2}{\nu}_h(-\psi(h))=\prescript{}{1}{\nu}_h(\theta(h))$.  This proves that $\bar{\chi_1}=\bar{\chi_2} $ by $\psi$. Similarly we can prove that $\beta_2, \beta_1$ satisfy (\ref{equi 1}) and $\tau_1, \tau_2$ satisfy (\ref{equi 2}) with respect to $\psi$. Next we prove  transitivity.  Let  $(\chi_1, \beta_1, \tau_1)$ $\sim$ $(\chi_2, \beta_2, \tau_2)$  (by $\theta_1$) and $ (\chi_2, \beta_2, \tau_2)$ $\sim$ $(\chi_3, \beta_3, \tau_3)$ (by $\theta_2$),  where
$\chi_i=(\prescript{}{i}{\nu}, \prescript{}{i}{\mu}, \prescript{}{i}{\sigma})$ for $i=1,2,3$.  We claim that  $\bar{\chi_3} \approx \bar{\chi_1}$ by $\phi:H \rightarrow I $ defined by $\psi(h)=\theta_1(h) \circ\theta_2(h)$. We have 
\begin{eqnarray}\label{first relation}
\prescript{}{2}{\nu_h} =
\prescript{}{1}{\nu_h} \lambda_{\theta_1(h)},
\end{eqnarray}
and 
\begin{eqnarray*}
\prescript{}{3}{\nu}_h =
\prescript{}{2}{\nu_h} \lambda_{\theta_2(h)}.
\end{eqnarray*}

Combining these two equations we have 
\begin{eqnarray*}
\prescript{}{3}{\nu}_h &=&
\prescript{}{1}{\nu_h} \lambda_{\theta_1(h)}\lambda_{\theta_2(h)}\\
&=& \prescript{}{1}{\nu_h} \lambda_{\phi(h)}.
\end{eqnarray*} 
For additive action we have

\begin{eqnarray*}
\prescript{}{2}{\mu}_{h}=i^+_{\prescript{}{1}{\nu_h}(-\theta_1(h))} \prescript{}{1}{\mu}_h
\end{eqnarray*}

\begin{eqnarray*}
\prescript{}{3}{\mu}_{h}=i^+_{\prescript{}{2}{\nu_h}(-\theta_2(h))} \prescript{}{2}{\mu}_h.
\end{eqnarray*}

Combining the above two equations we have 
\begin{eqnarray*}
\prescript{}{3}{\mu}_{h} &=& i^+_{\prescript{}{2}{\nu_h}(-\theta_2(h))}i^+_{\prescript{}{1}{\nu_h}(-\theta_1(h))} \prescript{}{1}{\mu}_h.\\
\end{eqnarray*}
Using (\ref{first relation}) we have 
\begin{eqnarray*}
\prescript{}{3}{\mu}_{h} &=& i^+_{\prescript{}{1}{\nu_h} \lambda_{\theta_1(h)(-\theta_2(h))}}
i^+_{\prescript{}{1}{\nu_h}(-\theta_2(h))} \prescript{}{1}{\mu}_h.\\
\end{eqnarray*}

Finally, using the relation $a+\lambda_a(b)=a \circ b$, we get 
\begin{eqnarray*}
\prescript{}{3}{\mu}_{h} &=& i^+_{\prescript{}{1}{\nu_h}(-\phi(h))}\prescript{}{1}{\mu}_h.
\end{eqnarray*}

Similarly we can prove that $\prescript{}{3}{\sigma}_h=i^{\circ}_{\phi(h)^{-1}} \prescript{}{1}{\sigma_h}$,  which shows that $\bar{\chi_3} \approx \bar{\chi_1}$ by $\phi$.  Next we prove that $\beta_1$ and $\beta_3$ also satisfy (\ref{cocycle 1}) with respect to $\phi$.  We have

\begin{align*}
\prescript{}{1}{\nu}_{h_1+h_2}(-\theta_1(h_1+h_2))+\beta_1(h_1, h_2)+ \prescript{}{1}{\mu}_{h_2}(\prescript{}{1}{\nu}_{h_1}(\theta_1(h_1)))+\prescript{}{1}{\nu}_{h_2}(\theta_1(h_2))=\beta_2(h_1, h_2),\\
\prescript{}{2}{\nu}_{h_1+h_2}(-\theta_2(h_1+h_2))+\beta_2(h_1, h_2)+ \prescript{}{2}{\mu}_{h_2}(\prescript{}{2}{\nu}_{h_1}(\theta_2(h_1)))+\prescript{}{2}{\nu}_{h_2}(\theta_2(h_2))=\beta_3(h_1, h_2).
\end{align*}

Combining these two equations and using the fact that $\prescript{}{1}{\nu}_{h}(\theta_1(h))+\prescript{}{2}{\nu}_{h}(\theta_2(h))=\prescript{}{1}{\nu}_{h}(\theta_1(h)\circ \theta_2(h)) $,  we have 

\begin{align*}
\beta_3(h_1, h_2)& =  \prescript{}{2}{\nu}_{h_1+h_2}(-\theta_2(h_1+h_2))+\prescript{}{1}{\nu}_{h_1+h_2}(-\theta_1(h_1+h_2))+\beta_1(h_1, h_2)+ \prescript{}{1}{\mu}_{h_2}(\prescript{}{1}{\nu}_{h_1}(\theta_1(h_1)))
 \\ 
 &\hspace*{4mm} +\prescript{}{1}{\nu}_{h_2}(\theta_1(h_2))+ \prescript{}{2}{\mu}_{h_2}(\prescript{}{2}{\nu}_{h_1}(\theta_2(h_1)))+\prescript{}{2}{\nu}_{h_2}(\theta_2(h_2))\\
&= \prescript{}{1}{\nu}_{h_1+h_2}(-(\theta_2(h_1+h_2)\circ\theta_1(h_1+h_2)))+\beta_1(h_1, h_2)+ \prescript{}{1}{\mu}_{h_2}(\prescript{}{1}{\nu}_{h_1}(\theta_1(h_1)))\\
& \hspace*{4mm} + \prescript{}{1}{\mu}_{h_2}(\prescript{}{2}{\nu}_{h_1}(\theta_2(h_1)))+ \prescript{}{1}{\nu}_{h_2}(\theta_1(h_2))+\prescript{}{2}{\nu}_{h_2}(\theta_2(h_2))\\
&= \prescript{}{1}{\nu}_{h_1+h_2}(-(\theta_2(h_1+h_2)\circ\theta_1(h_1+h_2)))+\beta_1(h_1, h_2)+ \prescript{}{1}{\mu}_{h_2}(\prescript{}{1}{\nu}_{h_1}(\theta_1(h_1) \circ \theta_2(h_1) ))\\
& \hspace*{4mm} + \prescript{}{1}{\nu}_{h_2}(\theta_1(h_2)\circ \theta_2(h_2))\\
&= \prescript{}{1}{\nu}_{h_1+h_2}(-\phi(h_1+h_2))+\beta_1(h_1, h_2)+ \prescript{}{1}{\mu}_{h_2}(\prescript{}{1}{\nu}_{h_1}(\phi(h_1)))+\prescript{}{1}{\nu}_{h_2}(\phi(h_2)).
\end{align*}

Similar calculation shows that  $\tau_1, \tau_3$ satisfy (\ref{cocycle 2}) with respect to  $\phi$. Hence the relation `$\sim$' is an equivalence relation.
\hfill $\Box$

\end{proof}

Define  $$\mathcal{H}^2_{\alpha}(H, I):=\mathcal{Z}^2_{\alpha}(H, I)/ \sim$$ and denote $[(\chi,\beta, \tau)] \in \mathcal{H}^2_{\alpha}(H, I)$, the equivalence class of $(\chi,\beta, \tau)$. This concept will be used in next section.

\section{action of cohomology group on extensions}
In this section, we define a faithful group action of $\Ho_{N}^2(H, \Z(I))$  \cite[Pg.6]{NMY} on $\Ext_{\alpha}(H, I)$ and we will show that this action is transitive whenever $I$ is  trivial skew brace.
\begin{thm} \label{main}
Let $\alpha$ be a coupling from $H$ to $I$. Then there exists a bijection between $\Ext_{\alpha}(H, I)$ and  $\mathcal{H}^2_{\alpha}(H, I)$.
\end{thm}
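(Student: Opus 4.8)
The plan is to construct two mutually inverse maps between $\Ext_{\alpha}(H, I)$ and $\mathcal{H}^2_{\alpha}(H, I)$. First I would define a map $\Phi \colon \Ext_{\alpha}(H, I) \to \mathcal{H}^2_{\alpha}(H, I)$ as follows: given an extension $\mathcal{E} \colon 0 \to I \to E \xrightarrow{\pi} H \to 0$ with coupling $\alpha$, choose any st-section $s$, form the actions $\chi = (\nu, \mu, \sigma)$ via \eqref{actions} and the pair $(\beta, \tau)$ via \eqref{cocycle1 sb} and \eqref{cocycle2 sb}, and set $\Phi([\mathcal{E}]) = [(\chi, \beta, \tau)]$. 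I must check that $(\chi, \beta, \tau) \in \mathcal{Z}^2_{\alpha}(H, I)$: that $\chi$ is an action follows from \eqref{action1 }--\eqref{action3}, that $\bar{\chi} \approx \alpha$ follows from the definition of the coupling of $\mathcal{E}$, that $(\beta, \tau)$ is a $2$-cocycle with action $\chi$ is the Remark after the definition (equivalently Proposition \ref{prop 2}(1)), and that \eqref{parent relation} holds comes from expanding the skew-brace compatibility condition in $E$ applied to elements $s(h_i) \circ y_i$, exactly as in the derivation preceding the definition of $\mathcal{Z}^2_\alpha$. Well-definedness of $\Phi$ on equivalence classes then reduces to two facts already available: changing the st-section changes $(\chi, \beta, \tau)$ within a single $\sim$-class by Proposition \ref{prop 2}(2), and replacing $\mathcal{E}$ by an equivalent extension does the same by Proposition \ref{prop 2}(3).

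Next I would build the inverse map $\Psi \colon \mathcal{H}^2_{\alpha}(H, I) \to \Ext_{\alpha}(H, I)$. Given $(\chi, \beta, \tau)$ with $\chi = (\nu, \mu, \sigma)$, define $E = H \times I$ as a set and equip it with
\begin{align*}
(h_1, y_1) + (h_2, y_2) &= \bigl(h_1 + h_2,\ \mu_{h_2}(y_1) + y_2 + \beta(h_1, h_2)\bigr),\\
(h_1, y_1) \circ (h_2, y_2) &= \bigl(h_1 \circ h_2,\ \tau(h_1, h_2) \circ \sigma_{h_2}(y_1) \circ y_2\bigr),
\end{align*}
the obvious generalization of the split case in Theorem 3.1 with the cocycles twisting the two group laws. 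The cocycle identities \eqref{cocycle 1} and \eqref{cocycle 2} are precisely the associativity conditions for $(E, +)$ and $(E, \circ)$ (with the $\mu$- and $\sigma$-relations \eqref{action2 } and \eqref{action3} handling the failure of homomorphism), the normalizations $\beta(h,0) = \beta(0,h) = 0$ and $\tau(h,0)=\tau(0,h)=0$ coming from $s(0) = 0$ give the identity and can be arranged, inverses are written down explicitly, and the left skew brace compatibility \eqref{bcomp} for $E$ is exactly condition \eqref{parent relation}. Then $0 \to I \to E \to H \to 0$ with the natural injection and projection is an extension, and one checks its coupling is $\alpha$ using $\bar\chi \approx \alpha$. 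I would then verify that $\sim$-equivalent triples yield equivalent extensions: if $\theta \colon H \to I$ realizes $(\chi_1, \beta_1, \tau_1) \sim (\chi_2, \beta_2, \tau_2)$, the map $E_1 \to E_2$, $(h, y) \mapsto (h,\ \theta(h) \circ y)$ (or the additively-written variant $(h,\ \prescript{}{1}{\nu}_h(\theta(h)) + y)$) is a skew brace isomorphism over $\mathrm{Id}_H$ and $\mathrm{Id}_I$, precisely because \eqref{equi 1} and \eqref{equi 2} say the two cocycles are related by the coboundary of $\theta$. So $\Psi$ is well defined on $\mathcal{H}^2_\alpha(H,I)$.

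Finally I would check $\Phi$ and $\Psi$ are mutually inverse. For $\Psi \circ \Phi = \mathrm{id}$: starting from $\mathcal{E}$ with section $s$, the reconstructed extension $E' = H \times I$ is identified with $E$ via $(h, y) \mapsto s(h) \circ y$ (equivalently $s(h) + \prescript{}{1}{\nu}_h(y)$), and Lemma 2.3 together with the definitions \eqref{actions}, \eqref{cocycle1 sb}, \eqref{cocycle2 sb} show this respects both operations; this is the same computation as in the proof of Theorem 3.3, now with cocycles present. For $\Phi \circ \Psi = \mathrm{id}$: on the reconstructed $E = H \times I$ the section $s(h) = (h, 0)$ recovers, via \eqref{actions} and \eqref{cocycle1 sb}--\eqref{cocycle2 sb}, exactly the triple $(\chi, \beta, \tau)$ we started from (up to the chosen representative of the $\sim$-class, which is all that is needed).

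I expect the main obstacle to be the bookkeeping in showing $\Psi$ lands in genuine left skew braces — i.e. that \eqref{cocycle 1}, \eqref{cocycle 2}, \eqref{parent relation} and the action identities \eqref{action1 }--\eqref{action3} are exactly strong enough to force the group axioms and \eqref{bcomp} on $H \times I$, and in keeping the additive-versus-multiplicative translations (via Lemma 2.3, $a + b = a \circ \lambda_a^{-1}(b)$) consistent throughout, since every quantity can be written two ways and the cocycle $\beta$ interacts with $\mu$ while $\tau$ interacts with $\sigma$ and $\nu$ simultaneously. The verification that the coupling of the reconstructed extension is $\alpha$ (not merely some coupling $\approx \alpha$) also needs a small argument using the Remark that $\bar\chi' \approx \bar\chi$ implies $\bar\chi' = \bar\chi$.
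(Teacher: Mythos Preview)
Your overall architecture---build $\Phi$ from an extension by choosing a section, build $\Psi$ by putting a skew brace structure on $H\times I$, and verify they are mutual inverses---is exactly the paper's strategy, and your handling of well-definedness via Propositions~\ref{well-def-act-coc} and~\ref{prop 2} is correct.

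There is, however, a genuine error in your construction of $\Psi$. Your multiplicative formula
\[
(h_1,y_1)\circ(h_2,y_2)=(h_1\circ h_2,\ \tau(h_1,h_2)\circ\sigma_{h_2}(y_1)\circ y_2)
\]
is the one obtained by parametrizing $E$ via $(h,y)\mapsto s(h)\circ y$; but your additive formula
\[
(h_1,y_1)+(h_2,y_2)=(h_1+h_2,\ \mu_{h_2}(y_1)+y_2+\beta(h_1,h_2))
\]
is (up to the placement of $\beta$) the one coming from the \emph{other} parametrization $(h,y)\mapsto s(h)+y$. These are two different bijections $H\times I\to E$, related by $(h,y)\leftrightarrow(h,\nu_h(y))$, and you cannot mix them. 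In the $\circ$-coordinate system that your multiplicative law uses, the correct addition is the one the paper writes down,
\[
(h_1,y_1)+(h_2,y_2)=\bigl(h_1+h_2,\ \nu^{-1}_{h_1+h_2}\bigl(\beta(h_1,h_2)+\mu_{h_2}(\nu_{h_1}(y_1))+\nu_{h_2}(y_2)\bigr)\bigr),
\]
and it is \emph{this} pair of operations for which \eqref{parent relation} is exactly the brace compatibility \eqref{bcomp}. With your mismatched formulas, \eqref{parent relation} is not the compatibility condition you would obtain, the structure need not be a skew brace, and even if it were, the section $s(h)=(h,0)$ would not recover the original $\nu$ (a direct computation of $-s(h)+(s(h)\circ(0,y))$ with your addition gives an expression linear in $y$, not $\nu_h(y)$), so $\Phi\circ\Psi$ would fail to be the identity. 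You noticed the $\nu$-conversion later (your parenthetical ``additively-written variant''), but it has to be built into one of the two operations from the start. Once you use the paper's additive formula, the rest of your outline goes through verbatim and coincides with the paper's proof.
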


\begin{proof}
Define $\phi : \Ext_{\alpha}(H, I) \rightarrow \mathcal{H}^2_{\alpha}(H, I)$ as follows. Let $\mathcal{E} := 0 \to I \stackrel{i}{\to}  E \stackrel{\pi}{\to} H \to 0$ be an extension with coupling $\alpha$.  Fix an st-section $s$, then there exists an action $\chi=(\nu, \mu, \sigma)$  as we defined in \eqref{actions} such that $\bar{\chi}=\alpha$; also we have $(\beta, \tau)$ as we defined in (\ref{cocycle1 sb}) and (\ref{cocycle2 sb}),  together they satisfy equation (\ref{parent relation}).  Set 
$$ 
\phi([\mathcal{E}])=[(\chi, \beta, \tau)].
$$
Then by Proposition \ref{well-def-act-coc} and Proposition \ref{prop 2} the map $\phi$ is well defined.
Next we define a map $\psi : \mathcal{H}^2_{\alpha}(H, I) \rightarrow Ext_{\alpha}(H, I)$ as follows.  Given an element  $(\chi, \beta, \tau)$ of $\mathcal{Z}^2_{\alpha}(H, I),$ we define  binary operations on the set $H \times I$ by setting

(1) $(h_1, y_1)+(h_2, y_2)=(h_1+h_2, \nu^{-1}_{h_1+h_2}(\beta(h_1, h_2)+\mu_{h_2}(\nu_{h_1}(y_1))+\nu_{h_2}(y_2)))$,

(2) $(h_1, y_1) \circ (h_2, y_2)=(h_1 \circ h_2,\tau(h_1, h_2) \circ \sigma_{h_2}(y_1) \circ y_2).$

It is easy to check that (\ref{cocycle 1}) and (\ref{cocycle 2}) gives  the associativity of `$+$' and `$\circ$', respectively, which is enough to see that $(H \times I, +)$ and $(H  \times I , \circ)$ are groups  and (\ref{parent relation}) proves that `$+$' and `$\circ$' defined here satisfy \eqref{bcomp}.  We denote this left skew brace structure by $(H, I , \chi, \beta, \tau)$.  Now Consider the extension 
$$\mathcal{E}(\chi, \beta, \tau) := 0 \to I \stackrel{i}{\to}  (H, I , \chi, \beta, \tau) \stackrel{\pi}{\to} H \to 0,$$ where $i(y)=(0, y)$ and $\pi(h, y)=h$ for all $h \in H$ and $y \in I$.  Define $\psi$ by setting $$\psi([(\chi, \beta, \tau)])= [\mathcal{E}(\chi, \beta, \tau)].$$ 
We show  that the map $\psi$ is well defined. Let $(\chi_1, \beta_1, \tau_1)$ $\sim$ $(\chi_2, \beta_2, \tau_2)$, then there exist a map $\theta : H \rightarrow I $ such that $\bar{\chi_1} \approx \bar{\chi_2}$ by $\theta$ and $\beta_1, \beta_2$ satisfy \eqref{equi 1} and $\tau_1, \tau_2$ satisfy \eqref{equi 2}, respectively.  Define $\zeta: \mathcal{E}(\chi_2, \beta_2, \tau_2) \rightarrow \mathcal{E}(\chi_1, \beta_1, \tau_1)$ given by 
$$
\zeta(h, y)=(h, \theta(h) \circ y).
$$

We have
\begin{align*}
\zeta((h_1, y_1)+(h_2, y_2))=& (h_1+h_2, \ \theta(h_1+h_2) \circ \prescript{}{2}{\nu}^{-1}_{h_1+h_2}(\beta_2(h_1, h_2)+ \prescript{}{2}{\mu}_{h_2}( \prescript{}{2}{\nu}_{h_1}(y_1))+ \prescript{}{2}{\nu}_{h_2}(y_2)))\\
=&(h_1+h_2, \ \theta(h_1+h_2)+ \prescript{}{1}{\nu}^{-1}_{h_1+h_2}(\beta_2(h_1, h_2)+ \prescript{}{2}{\mu}_{h_2}( \prescript{}{2}{\nu}_{h_1}(y_1))+ \prescript{}{2}{\nu}_{h_2}(y_2)))\\
=& (h_1+h_2, \ \prescript{}{1}{\nu}^{-1}_{h_1+h_2}(\beta_1(h_1, h_2)+ \prescript{}{1}{\mu}_{h_2}(\prescript{}{1}{\nu}_{h_1}(\theta(h_1)))+\prescript{}{1}{\nu}_{h_2}(\theta(h_2))\\
&+ \prescript{}{2}{\mu}_{h_2}( \prescript{}{2}{\nu}_{h_1}(y_1))+ \prescript{}{2}{\nu}_{h_2}(y_2)))\\
=& (h_1+h_2, \ \prescript{}{1}{\nu}^{-1}_{h_1+h_2}(\beta_1(h_1, h_2)+ \prescript{}{1}{\mu}_{h_2}(\prescript{}{1}{\nu}_{h_1}(\theta(h_1)))+ \prescript{}{1}{\mu}_{h_2}( \prescript{}{2}{\nu}_{h_1}(y_1))\\
&+ \prescript{}{1}{\nu}_{h_2}(\theta(h_2))+ \prescript{}{2}{\nu}_{h_2}(y_2)))\\
=& (h_1+h_2, \ \prescript{}{1}{\nu}^{-1}_{h_1+h_2}(\beta_1(h_1, h_2)+ \prescript{}{1}{\mu}_{h_2}(\prescript{}{1}{\nu}_{h_1}(\theta(h_1) \circ y_1))\\
&+ \prescript{}{1}{\nu}_{h_2}(\theta(h_2) \circ y_2)))\\
=&(h_1, \ \theta(h_1) \circ y_1)+(h_2, \ \theta(h_2) \circ y_2).
\end{align*}
Which shows that $\zeta$ is linear in `$+$'. Similarly we can show that $\zeta$ is linear in `$\circ$' as well. It is easy to see that $\zeta$ is an isomorphism and the following diagram commutes
$$
\begin{CD}
 0 @>>> I @>>> \mathcal{E}(\chi_2, \beta_2, \tau_2) @>{{\pi_1} }>> H  @>>> 0\\
 &&  @V{\text{Id}}VV @V{\zeta}VV @ VV{ \text{Id}}V \\
 0 @>>> I @>>> \mathcal{E}(\chi_2, \beta_2, \tau_2) @>{{\pi_2} }>> H @>>> 0,
\end{CD}
$$
where $\pi_1$ and $\pi_2$ are natural projections. Hence, $\mathcal{E}(\chi_1, \beta_1, \tau_1)$ and $\mathcal{E}(\chi_2, \beta_2, \tau_2)$ are equivalent extensions. That shows that the map $\psi$ is well defined. It is easy to check that $\psi$ is well-defined and $\psi$ and $\phi$ are inverses of each other. The proof is now complete.
\hfill $\Box$

\end{proof}

The elements of $\mathcal{Z}^2_{\alpha}(H, I)$ are called associated triplets as every element of $\mathcal{Z}^2_{\alpha}(H, I)$ is associated to some extension in view of Theorem \ref{Main thm}.

\begin{thm}\label{action change}
Let $H$ and $I$ be two skew braces and let $(\chi, \beta, \tau) \in \mathcal{Z}^2_{\alpha}(H, I)$ be an associated triplet.  If $\chi^\prime$ is an action of $H$ on $I$ for which  $\bar{\chi} \approx\bar{\chi}^\prime$, then there exist maps $\beta^\prime, \tau^\prime : H \rightarrow I$ such that $(\chi^\prime, \beta^\prime, \tau^\prime)$ is an associated triplet and $[(\chi, \beta, \tau)]= [ (\chi^\prime, \beta^\prime, \tau^\prime)]$.
\end{thm}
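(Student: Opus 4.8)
The plan is to start from the given associated triplet $(\chi, \beta, \tau) \in \mathcal{Z}^2_{\alpha}(H, I)$ together with the hypothesis $\bar{\chi} \approx \bar{\chi}^\prime$, which by definition supplies a map $\theta : H \to I$ with $\theta(0) = 0$ and $\nu^\prime_h = \nu_h \lambda_{\theta(h)}$, $\mu^\prime_h = i^+_{\nu_h(-\theta(h))} \mu_h$, $\sigma^\prime_h = i^\circ_{\theta(h)^{-1}} \sigma_h$. Using this same $\theta$, I would simply \emph{define} $\beta^\prime$ and $\tau^\prime$ by solving \eqref{equi 1} and \eqref{equi 2} for the primed cocycles: set
$$\beta^\prime(h_1, h_2) := \nu_{h_1+h_2}(-\theta(h_1+h_2)) + \beta(h_1, h_2) + \mu_{h_2}(\nu_{h_1}(\theta(h_1))) + \nu_{h_2}(\theta(h_2)),$$
$$\tau^\prime(h_1, h_2) := \theta(h_1 \circ h_2)^{-1} \circ \tau(h_1, h_2) \circ \sigma_{h_2}(\theta(h_1)) \circ \theta(h_2),$$
where on the right one uses $\chi$, not $\chi^\prime$. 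These are the formulas forced by the equivalence relation `$\sim$', so that by construction $(\chi, \beta, \tau) \sim (\chi^\prime, \beta^\prime, \tau^\prime)$ via $\theta$, giving the equality of classes $[(\chi, \beta, \tau)] = [(\chi^\prime, \beta^\prime, \tau^\prime)]$ once we know the primed triple actually lies in $\mathcal{Z}^2_{\alpha}(H, I)$.

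The real content is therefore to verify that $(\chi^\prime, \beta^\prime, \tau^\prime) \in \mathcal{Z}^2_{\alpha}(H, I)$, i.e. that $\chi^\prime$ is an action with $\bar{\chi}^\prime \approx \alpha$, that $(\beta^\prime, \tau^\prime)$ is a $2$-cocycle with action $\chi^\prime$ (satisfies \eqref{cocycle 1} and \eqref{cocycle 2}), and that $(\chi^\prime, \beta^\prime, \tau^\prime)$ satisfies \eqref{parent relation}. The condition $\bar{\chi}^\prime \approx \alpha$ is immediate: $\bar{\chi} \approx \alpha$ is given and $\bar{\chi}^\prime \approx \bar{\chi}$ by hypothesis, and $\approx$ is transitive (this is essentially the transitivity argument already carried out for `$\sim$' in the preceding proposition, restricted to its action component). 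That $\chi^\prime$ is an action — i.e. satisfies \eqref{action1 }, \eqref{action2 }, \eqref{action3} — can be read off directly: since $\approx$-related couplings come from genuine actions and the relations \eqref{action1 }--\eqref{action3} only constrain $\nu, \mu, \sigma$ up to the inner/$N$-type modifications that $\theta$ introduces, one checks $\nu^\prime_{h_1 \circ h_2} = \nu^\prime_{h_1}\nu^\prime_{h_2}\lambda^{-1}_{\tau^\prime(h_1,h_2)}$ and its additive and circle analogues by substituting the definitions and using the skew brace identity $a + \lambda_a(b) = a \circ b$ exactly as in the transitivity computation above.

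The main obstacle — and the step I would expect to consume most of the write-up — is checking that $(\beta^\prime, \tau^\prime)$ satisfies the cocycle identities \eqref{cocycle 1}, \eqref{cocycle 2} and, more seriously, the compatibility relation \eqref{parent relation} with respect to $\chi^\prime$. Rather than expanding \eqref{parent relation} from scratch, the clean route is a conceptual one: realize $(\chi, \beta, \tau)$ as coming from an actual extension $\mathcal{E}(\chi, \beta, \tau)$ via Theorem \ref{main}, choose in that extension the st-section $s^\prime(h) := s(h) \circ \theta(h)$ in place of the original $s$, and observe that the triple of actions and the pair of cocycles attached to $s^\prime$ are precisely $\chi^\prime$ and $(\beta^\prime, \tau^\prime)$ — this is exactly the content of Proposition \ref{prop 2}(2), with the formulas there matching our definitions of $\beta^\prime, \tau^\prime$. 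Since any st-section of a genuine extension produces a triple satisfying \eqref{cocycle 1}, \eqref{cocycle 2} and \eqref{parent relation} (this is Proposition \ref{prop 2}(1) together with the displayed compatibility derivation preceding the definition of $\mathcal{Z}^2_\alpha$), we conclude $(\chi^\prime, \beta^\prime, \tau^\prime) \in \mathcal{Z}^2_\alpha(H, I)$ without any direct manipulation of \eqref{parent relation}. With membership established, $(\chi, \beta, \tau) \sim (\chi^\prime, \beta^\prime, \tau^\prime)$ by $\theta$ by construction, so the two classes in $\mathcal{H}^2_\alpha(H, I)$ coincide, completing the proof.
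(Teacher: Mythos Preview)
Your proposal is correct and follows essentially the same route as the paper: both arguments realize $(\chi,\beta,\tau)$ as coming from an extension via Theorem~\ref{main}, replace the st-section $s$ by $s'(h)=s(h)\circ\theta(h)$, and observe that the triple attached to $s'$ is precisely $(\chi',\beta',\tau')$, whence membership in $\mathcal{Z}^2_\alpha(H,I)$ and the equality of classes. The only difference is cosmetic: you write down the explicit formulas for $\beta',\tau'$ in advance (via \eqref{equi 1}, \eqref{equi 2}) and then invoke the extension argument to certify them, whereas the paper goes directly to the extension and lets $\beta',\tau'$ be whatever $s'$ produces.
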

 \begin{proof}
 In the view of Theorem \ref{main} there exists an extension $$\mathcal{E}(\chi, \beta, \tau) := 0 \to I \stackrel{i}{\to}  E \stackrel{\pi}{\to} H \to 0,$$ corresponding to the associated triplet $(\chi, \beta, \tau)$. Let $s : H \rightarrow E$ be a st-section inducing $(\chi, \beta, \tau)$. Since $\bar{\chi} \approx\bar{\chi^\prime}$, there exist a map $\theta : H \rightarrow I$ such that $\theta(0)=0$, $\nu^{\prime}_{h}=\nu_{h} \lambda_{\theta(h)}$,  $\mu^\prime_{h}=i^+_{\nu_h(-\theta(h))} \mu_h$ and $\sigma^\prime=i^{\circ}_{\theta(h)^{-1}} \sigma_{h}$. Define an st-section  $s^\prime(h)=s(h) \circ \theta(h)$ for all $ x \in H$. Consequently we have an associated triplet $(\chi_1, \beta^\prime, \tau^\prime)$ corresponding to the st-section $s^\prime$ and it is easy to see that $\chi^\prime=\chi_1$. Hence $[(\chi, \beta, \tau)]=[(\chi_1, \beta^\prime, \tau^\prime)])=[(\chi^\prime, \beta^\prime, \tau^\prime)]$ as $(\chi, \beta^\prime, \tau^\prime)$ and $(\chi_1,  \beta^\prime, \tau^\prime)$ are associated triplet of the  same extension by different st-sections. This completes the proof.
 \hfill $\Box$

\end{proof}
 We now state a theorem analogous  to  \cite[Theorem 3.6]{NMY}.  Let $H$ be a skew brace and $I$ be an abelian group.  
 
 Define
\begin{align*}
 \Z_N^2(H, I)=\Bigg\{(g ,f) \hspace{.1cm} \Big \vert  \hspace{.1cm}g,f:H \times H \rightarrow I, \hspace{.1cm} \substack{ g,f \hspace{.1cm} \mbox{sastisy}\hspace{.1cm} (\ref{cocycle 1}) \hspace{.1cm}\mbox{and}\hspace{.1cm} (\ref{cocycle 2}),\hspace{.1cm} \mbox{respectively, and} \\ \hspace{.1cm}\mbox{vanish on degenerate tupples}}  \Bigg\},
\end{align*}
and $\B_N^2(H, I)$ is the collection of the pairs $(g, f) \in  \Z_N^2(H, I)$ such that there exists a map $\theta$ from $H$ to $I$ with $g= \nu_{h_1+h_2}(-\theta(h_1+h_2))+\mu_{h_2}((\nu_{h_1}(\theta(h_1)))+\nu_{h_2}(\theta(h_2))$ and $f=-\theta(h_1 \circ h_2) + \sigma_{h_2}\theta(h_1)) + \theta(h_2)$.

Put
\begin{align*}
\Z_N^1(H, I)=\Bigg\{\substack{ \theta \hspace{.1cm}\mbox{is a map from} \hspace{.1cm} H \hspace{.1cm} \mbox{to} \hspace{.1cm} I \hspace{.1cm} \mbox{such that }\hspace{.1cm} \theta(h_1 \circ h_2)= \sigma_{h_2}(\theta(h_1)+\theta(h_2) \hspace{.1cm} \\ \mbox{and} \hspace{.1cm} \nu_{h_1+h_2}(\theta(h_1 + h_2))= \mu_{h_2}(\nu_{h_1}(\theta(h_1))+\nu_{h_2}(\theta(h_2))}   \Bigg\},
\end{align*}
the set $\Z_N^1(H, I)$ is called as the set of derivations, and
$$
\Ho^2_N(H, I):=\Z_N^2(H, I)/\B_N^2(H, I)
$$
 is the second cohomology group of $H$ by $I$.

\begin{thm} 
Let $H$ be a skew brace and let $I$ be an abelian group equiped with trivial brace structure. Let $\mathcal{E} := 0 \to I \stackrel{i}{\to}  E \stackrel{\pi}{\to} H \to 0$ be an extension.  Then the coupling and action are  same, and  there is a bijection between $\Ext_{(\nu, \mu, \sigma)}(H, I)$ and $\Ho^2_N(H, I)$. 
\end{thm}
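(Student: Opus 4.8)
The plan is to obtain this as the specialization of Theorem \ref{main} to the case where $I$ is abelian and $(I,+)=(I,\circ)$ carries the trivial brace structure, after recording how the general notions degenerate. Since $a\circ b=a+b$ on $I$, one has $\lambda_y(z)=-y+(y\circ z)=-y+y+z=z$, so $\lambda_y=\Id_I$ for all $y\in I$ and hence $N=1$; and since $(I,+)=(I,\circ)$ is abelian, $\Inn(I,+)=\Inn(I,\circ)=1$. Thus the natural projections defining $\bar\nu,\bar\mu,\bar\sigma$ are identity maps and the coupling $\bar\chi=(\bar\nu,\bar\mu,\bar\sigma)$ coincides with the action $\chi=(\nu,\mu,\sigma)$, which is the first claim. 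The same triviality turns the action identities \eqref{action1 }--\eqref{action3} into $\nu_{h_1\circ h_2}=\nu_{h_1}\nu_{h_2}$, $\mu_{h_1+h_2}=\mu_{h_2}\mu_{h_1}$, $\sigma_{h_1\circ h_2}=\sigma_{h_2}\sigma_{h_1}$, and makes the relation $\bar\chi'\approx\bar\chi$ force $\nu'=\nu$, $\mu'=\mu$, $\sigma'=\sigma$, because $\lambda_{\theta(h)}$, $i^+_{\nu_h(-\theta(h))}$ and $i^\circ_{\theta(h)^{-1}}$ are all trivial. In particular $\mathcal{E}$ has a unique action, and $\Ext_{(\nu,\mu,\sigma)}(H,I)=\Ext_\alpha(H,I)$ for the coupling $\alpha$ of $\mathcal{E}$.

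Next I would identify $\mathcal{H}^2_\alpha(H,I)$ with $\Ho^2_N(H,I)$. Since every action with coupling $\alpha$ equals $(\nu,\mu,\sigma)$, an element of $\mathcal{Z}^2_\alpha(H,I)$ is just a pair $(\beta,\tau)$ of maps $H\times H\to I$ satisfying \eqref{cocycle 1}, \eqref{cocycle 2} and \eqref{parent relation}. By the remark after \eqref{parent relation}, when $I$ is abelian and trivial \eqref{parent relation} reduces to the good-triplet condition of \cite[Pg.5]{NMY}, a statement about $(\nu,\mu,\sigma)$ alone that holds automatically because $(\nu,\mu,\sigma)$ comes from $\mathcal{E}$; hence \eqref{parent relation} puts no constraint on $(\beta,\tau)$. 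Cocycles attached to an st-section with $s(0)=0$ vanish on degenerate tuples, and passing to such normalized representatives changes neither the cocycle classes nor the coboundaries, so $\mathcal{Z}^2_\alpha(H,I)$ may be identified with $\Z^2_N(H,I)$. Finally, with the action fixed and $I$ abelian trivial, $(\chi_1,\beta_1,\tau_1)\sim(\chi_2,\beta_2,\tau_2)$ amounts to the existence of $\theta:H\to I$ with $\theta(0)=0$ for which \eqref{equi 1} and \eqref{equi 2} hold; written out with $\circ=+$ on $I$ these say $\beta_2-\beta_1=\nu_{h_1+h_2}(-\theta(h_1+h_2))+\mu_{h_2}(\nu_{h_1}(\theta(h_1)))+\nu_{h_2}(\theta(h_2))$ and $\tau_2-\tau_1=-\theta(h_1\circ h_2)+\sigma_{h_2}(\theta(h_1))+\theta(h_2)$, i.e. $(\beta_2-\beta_1,\tau_2-\tau_1)\in\B^2_N(H,I)$. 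Hence $\mathcal{H}^2_\alpha(H,I)=\Z^2_N(H,I)/\B^2_N(H,I)=\Ho^2_N(H,I)$, and composing with the bijection of Theorem \ref{main} yields the bijection $\Ext_{(\nu,\mu,\sigma)}(H,I)\leftrightarrow\Ho^2_N(H,I)$.

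The part I expect to require genuine work is verifying that \eqref{parent relation} collapses exactly as the remark asserts: substituting $\lambda=\Id$ and trivializing the inner automorphisms, one must check that the $\beta$- and $\tau$-contributions cancel so that what survives is precisely the good-triplet compatibility among $\nu,\mu,\sigma$, with no residual condition on the cocycle. The remaining bookkeeping---reconciling the unnormalized cocycles permitted in $\mathcal{Z}^2_\alpha(H,I)$ with the degenerate-tuple vanishing built into $\Z^2_N(H,I)$, and matching the two coboundary descriptions---is routine once one works with normalized representatives throughout.
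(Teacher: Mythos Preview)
The paper does not actually prove this theorem; it is stated as the specialization ``analogous to \cite[Theorem~3.6]{NMY}'' and the argument is implicitly deferred to that reference. Your idea of deriving it instead from Theorem~\ref{main} of the present paper is natural and in principle the right route, and the first paragraph---showing that for $I$ abelian with trivial brace structure one has $\lambda_y=\Id$, $N=1$, $\Inn(I,+)=\Inn(I,\circ)=1$, hence coupling $=$ action and $\bar\chi'\approx\bar\chi$ forces $\chi'=\chi$---is correct and is exactly the content of the first assertion.

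The gap is in your treatment of the parent relation~\eqref{parent relation}. Your expectation that ``the $\beta$- and $\tau$-contributions cancel so that what survives is precisely the good-triplet compatibility among $\nu,\mu,\sigma$, with no residual condition on the cocycle'' is incorrect. Even after setting $\lambda=\Id$ and trivializing inner automorphisms, put $y_1=y_2=y_3=0$ in~\eqref{parent relation}: what remains is a genuine \emph{mixed} identity relating $\beta$ and $\tau$, of the form
\[
\nu_{h_1\circ(h_2+h_3)}\!\big(\tau(h_1,h_2+h_3)\big)+\nu_{h_1\circ(h_2+h_3)}\nu_{h_2+h_3}^{-1}\!\big(\beta(h_2,h_3)\big)
= (\text{a combination of }\beta\text{- and }\tau\text{-values}),
\]
which is not a consequence of \eqref{cocycle 1} and \eqref{cocycle 2} separately (those never couple $\beta$ with $\tau$). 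The coefficients of $y_1,y_2,y_3$ in~\eqref{parent relation} do indeed give the compatibility among $\nu,\mu,\sigma$, but the constant term gives this extra cocycle condition. In \cite{NMY} this mixed condition is part of the definition of a $2$-cocycle; the description of $\Z^2_N(H,I)$ displayed here, listing only \eqref{cocycle 1} and \eqref{cocycle 2}, should be read in that sense. So to make your identification $\mathcal{H}^2_\alpha(H,I)\cong \Ho^2_N(H,I)$ go through, you must either (i) include the mixed condition in $\Z^2_N$ (as in \cite{NMY}), in which case the match with $\mathcal{Z}^2_\alpha$ is immediate, or (ii) prove that it is automatic---which it is not. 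With that correction your specialization-of-Theorem~\ref{main} argument is sound; without it the claimed bijection is not established.
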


\begin{thm}\label{Main thm}
Let $[\mathcal{E}] \in  \Ext_{\alpha}(H, I)$ and $(\chi, \beta, \tau)$ be an associated triplet of $\mathcal{E}$. Then  for $[(\beta_1, \tau_1)] \in \Ho^2_{N}(H, \Ann(I))$, the operation $$[(\beta_1, \tau_1)] [\mathcal{E}(\chi, \beta, \tau)]= [\mathcal{E}(\chi, \beta_1+\beta, \tau_1+\tau)] $$ defines a free  action of the group $\Ho^2_{N}(H, \Ann(I))$ on the set $\Ext_{\alpha}(H, I)$. If $I$ is trivial skew brace, then this action becomes transtivite. 
\end{thm}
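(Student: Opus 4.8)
The plan is to verify the three assertions in sequence: that the formula gives a well-defined action, that the action is free, and that it is transitive when $I$ is a trivial skew brace. First I would check that the operation is well defined on representatives. Given $[(\beta_1,\tau_1)] \in \Ho^2_N(H,\Ann(I))$ and an associated triplet $(\chi,\beta,\tau)$ of $\mathcal{E}$, one must confirm that $(\chi,\beta_1+\beta,\tau_1+\tau)$ is again an associated triplet; here it is crucial that $\beta_1,\tau_1$ take values in $\Ann(I) = \Soc(I)\cap\Z(I,\circ)$, so that $\beta_1(h_1,h_2)$ is central in $(I,+)$ and $\lambda_{\beta_1(h_1,h_2)}=\Id$, and similarly $\tau_1$ is $\circ$-central and $\lambda$-trivial. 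With these properties, one verifies that $(\beta_1+\beta,\tau_1+\tau)$ still satisfies the cocycle conditions \eqref{cocycle 1}, \eqref{cocycle 2} (the extra terms $\beta_1,\tau_1$ satisfy them on their own by membership in $\Z^2_N(H,\Ann(I))$, and the actions $\mu_{h_3},\sigma_{h_3}$ restrict to the identity on $\Ann(I)$ since $\Ann(I)$ is characteristic and the action is inner there) and that the parent relation \eqref{parent relation} is preserved — the terms involving $\beta_1,\tau_1$ decouple precisely because they land in $\Ann(I)$, where $\nu,\sigma$ act trivially and addition and $\circ$ agree. Then I would check independence of the representative $(\beta,\tau)$: if $(\chi,\beta,\tau)\sim(\chi',\beta',\tau')$ via $\theta$, the same $\theta$ exhibits $(\chi,\beta_1+\beta,\tau_1+\tau)\sim(\chi',\beta_1+\beta',\tau_1+\tau')$, again because the coboundary terms built from $\theta$ only interact with $\beta,\tau$ and the $\Ann(I)$-valued part is untouched. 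Independence of the representative $(\beta_1,\tau_1)$ modulo $\B^2_N(H,\Ann(I))$ follows from the same coboundary bookkeeping. The action axioms $[(0,0)][\mathcal{E}]=[\mathcal{E}]$ and $[(\beta_1,\tau_1)]([(\beta_2,\tau_2)][\mathcal{E}]) = [(\beta_1+\beta_2,\tau_1+\tau_2)][\mathcal{E}]$ are then immediate from the additive bookkeeping in the $\Ann(I)$-coordinate.

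Next, for freeness, suppose $[(\beta_1,\tau_1)][\mathcal{E}(\chi,\beta,\tau)] = [\mathcal{E}(\chi,\beta,\tau)]$, i.e. $\mathcal{E}(\chi,\beta_1+\beta,\tau_1+\tau)$ and $\mathcal{E}(\chi,\beta,\tau)$ are equivalent extensions. By Theorem \ref{main} this means $(\chi,\beta_1+\beta,\tau_1+\tau)\sim(\chi,\beta,\tau)$, so there is a $\theta:H\to I$ with $\bar\chi\approx\bar\chi$ via $\theta$ and \eqref{equi 1}, \eqref{equi 2} relating the two cocycle pairs. The condition $\nu_h = \nu_h\lambda_{\theta(h)}$ forces $\lambda_{\theta(h)}=\Id$, and $\mu_h = i^+_{\nu_h(-\theta(h))}\mu_h$ forces $\nu_h(\theta(h))\in\Z(I,+)$, and $\sigma_h = i^\circ_{\theta(h)^{-1}}\sigma_h$ forces $\theta(h)\in\Z(I,\circ)$; combining these shows $\theta$ takes values in $\Ann(I)$ (using that $\nu_h$ is an automorphism of $(I,+)$ preserving $\Soc(I)$). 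Then \eqref{equi 1} and \eqref{equi 2}, restricted to the $\Ann(I)$-valued setting where the actions are trivial, collapse to exactly the statement that $(\beta_1,\tau_1)$ is the coboundary of $\theta$, i.e. $[(\beta_1,\tau_1)] = 0$ in $\Ho^2_N(H,\Ann(I))$. This gives freeness.

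Finally, suppose $I$ is a trivial skew brace, so $+$ and $\circ$ coincide on $I$ and $\Ann(I)=I$. For transitivity I would take two classes $[\mathcal{E}_1],[\mathcal{E}_2]\in\Ext_\alpha(H,I)$ and, using Theorem \ref{action change}, choose associated triplets $(\chi,\beta_1',\tau_1')$ and $(\chi,\beta_2',\tau_2')$ with the \emph{same} action $\chi$ (this is exactly what Theorem \ref{action change} provides, after replacing one triplet by an equivalent one realizing the common coupling $\alpha$). Then set $\beta_1 := \beta_2' - \beta_1'$ and $\tau_1 := \tau_2' - \tau_1'$; since $I$ is trivial these are maps $H\times H\to I = \Ann(I)$, and subtracting the two instances of the cocycle conditions \eqref{cocycle 1}, \eqref{cocycle 2} for the common action $\chi$ shows $(\beta_1,\tau_1)\in\Z^2_N(H,\Ann(I))$ (the $\mu,\sigma$ terms cancel in pairs). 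By construction $[(\beta_1,\tau_1)][\mathcal{E}_1] = [\mathcal{E}(\chi,\beta_1+\beta_1',\tau_1+\tau_1')] = [\mathcal{E}(\chi,\beta_2',\tau_2')] = [\mathcal{E}_2]$, proving transitivity.

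The main obstacle is the well-definedness step: one must carefully check that adding an $\Ann(I)$-valued $2$-cocycle to $(\beta,\tau)$ preserves the parent relation \eqref{parent relation}, whose expression is genuinely intricate. The point that makes it go through is that every occurrence of $\beta_1$ or $\tau_1$ in \eqref{parent relation} sits inside an application of $\nu$, $\mu$, or $\sigma$ composed with group operations, and on $\Ann(I)$ these automorphisms act trivially while $+$ and $\circ$ agree — so the $\beta_1,\tau_1$ contributions separate out into exactly the (trivial-action) cocycle identity \eqref{cocycle 1}, \eqref{cocycle 2} that they already satisfy, and the rest is the parent relation for $(\beta,\tau)$. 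Once this separation is made explicit, freeness and transitivity are comparatively routine consequences of Theorem \ref{main} and Theorem \ref{action change}.
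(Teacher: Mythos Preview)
Your overall architecture matches the paper's proof: well-definedness (which the paper dismisses as ``easy to check''), freeness by showing the witnessing $\theta$ lands in $\Ann(I)$ and then reading off the coboundary relation, and transitivity via Theorem~\ref{action change} to align the two actions before subtracting. But you rely on two incorrect claims.

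First, you repeatedly assert that $\nu_h,\mu_h,\sigma_h$ act trivially on $\Ann(I)$ (``the actions $\mu_{h_3},\sigma_{h_3}$ restrict to the identity on $\Ann(I)$ since \ldots\ the action is inner there''; ``on $\Ann(I)$, where $\nu,\sigma$ act trivially''). This is false: these maps come from conjugation by $s(h)\in E$, not by elements of $I$, so there is no reason they fix $\Ann(I)$ pointwise. Indeed the very definitions of $\Z^2_N(H,\Ann(I))$ and $\B^2_N(H,\Ann(I))$ carry these nontrivial actions; the paper's coboundary formula reads $\beta_1(h_1,h_2)=\nu_{h_1+h_2}(-\theta(h_1+h_2))+\mu_{h_2}(\nu_{h_1}(\theta(h_1)))+\nu_{h_2}(\theta(h_2))$ with $\nu,\mu$ still present. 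The separation you need for well-definedness and freeness works not because the $H$-actions vanish but because values in $\Ann(I)$ are central in both $(I,+)$ and $(I,\circ)$ and have $\lambda$ trivial, so the $\beta_1,\tau_1$ contributions commute past everything and then satisfy the (nontrivial-action) cocycle identities on their own.

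Second, in the transitivity step you claim that for a trivial skew brace $\Ann(I)=I$. A trivial skew brace need not be abelian (the paper itself uses $D_{2n}$ as one), and in fact $\Ann(I)=\Z(I,+)$ in that case. So you cannot simply say the differences $\beta_2'-\beta_1'$ and $\tau_2'-\tau_1'$ land in $\Ann(I)$ for free. The paper closes this gap by invoking \eqref{action1 }--\eqref{action3}: since both associated triplets share the same $\chi_2$, comparing the two instances of $\mu_{h_1+h_2}=i^+_{-\beta(h_1,h_2)}\mu_{h_2}\mu_{h_1}$ forces $\beta_3(h_1,h_2)\in\Z(I,+)$, and similarly \eqref{action1 }, \eqref{action3} force $\tau_3(h_1,h_2)$ into $\Ker\lambda\cap\Z(I,\circ)$; for trivial $I$ all three coincide with $\Ann(I)=\Z(I)$. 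You need this step.
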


\begin{proof}
It is easy to check that the  action under consideration is well defined.  Let $[\mathcal{E}(\chi, \beta, \tau)] \in Ext_{\alpha}(H, I)$ and $[(\beta_1, \tau_1)] \in  \Ho_N^2(H, \Ann(I))$ be such that $[(\beta_1, \tau_1)] [\mathcal{E}(\chi, \beta, \tau)]=[\mathcal{E}(\chi, \beta, \tau)]$. Then $$[(\chi, \beta_1+\beta, \tau_1+\tau)]=[(\chi, \beta, \tau)],$$ and therefore there exist a map $\theta: H \rightarrow I$ such that $\theta(0)=0$ and $\bar{\chi} \approx\bar{\chi}$ by $\theta$, which implies that $\theta(h) \in \Ann(I)$ for all $h \in H$, and
\begin{equation*}
-\nu_{h_1+h_2}(\theta(h_1+h_2))+\beta(h_1, h_2)+ \mu_{h_2}(\nu_{h_1}(\theta(h_1)))+\nu_{h_2}(\theta(h_2))=\beta(h_1, h_2)+\beta_1(h_1, h_2)
\end{equation*}
and
\begin{equation*}
\theta(h_1 \circ h_2)^{-1} \circ \tau(h_1, h_2) \circ \sigma_{h_2}(\theta(h_1)) \circ \theta(h_2)=\tau(h_1, h_2)+\tau_1(h_1, h_2)
\end{equation*}
for all $h_1, h_2 , h_3 \in H$(using the fact that $\theta(h) \in \Ann(I) $ for all $h \in H$).

 We have 
 \begin{align*}
 \beta_1(h_1, h_2)&= \nu_{h_1+h_2}(-\theta(h_1+h_2))+\mu_{h_2}((\nu_{h_1}(\theta(h_1)))+\nu_{h_2}(\theta(h_2)),\\ 
\tau_1(h_1, h_2)&=-\theta(h_1 \circ h_2) + \sigma_{h_2}\theta(h_1)) + \theta(h_2).
 \end{align*}
Thus $[(\tau_1, \beta_1)]=1$, and hence the action is free.

Let $\mathcal{E}_1$ and $\mathcal{E}_2$ be two elements in $Ext_{\alpha}(H,I)$. Then for $i=1,2$,  $[\mathcal{E}_i]=[\mathcal{E}_i(\chi_i, \beta_i,\tau_i)]$ for some associated trilpet $(\chi_i, \beta_i,\tau_i)$, where $\chi_i= (\prescript{}{i}{\nu}, \prescript{}{i}{\mu}, \prescript{}{i}{\sigma})$.  By Theorem \ref{action change}, we can construct $(\beta^\prime,\tau^\prime)$ such that $(\chi_2, \beta^\prime, \tau^\prime)$ is an associated triplet with
$$ [\mathcal{E}_1(\chi_1, \beta_1, \tau_1)]=[\mathcal{E}_1(\chi_2, \beta^\prime, \tau^\prime)].$$
We set 
$$\beta_3(h_1, h_2)=\beta^\prime(h_1,h_2)-\beta_2(h_1, h_2)$$ 
and 
$$\tau_3(h_1, h_2)=\tau^\prime(h_1, h_2)\circ \tau_2(h_1, h_2)^{-1}.$$
Now $(\beta^\prime, \tau^\prime)$ and $(\beta_2, \tau_2)$ are $2$-cocycles with the same action $\chi_2$.  Then from (\ref{action1 }), (\ref{action2 }) and (\ref{action3}) it follows that $\beta_3(h_1, h_2) \in Z(I,+)$ and $\tau_3(h_1, h_2) \in Soc(I)$ for all $h_1 , h_2 \in H$. If we take $I$ to be trivial skew brace,  then $\Z(I,+)= \Soc(I)=\Ann(I)$. Finally we get $\beta_3, \tau_3 : H \rightarrow Ann(I)$. It is easy to see that $(\beta_3, \tau_3)$ is $2-cocycle$ with respect to the action $\chi_2$ and $(\beta_3, \tau_3)[\mathcal{E}(\chi_2, \beta_2, \tau_2)]=[\mathcal{E}(\chi_2, \beta^\prime, \tau^\prime)]=[\mathcal{E}(\chi_1, \beta_1, \tau_1)]
$. Hence the action is transtive, which completes the proof.
\hfill $\Box$

\end{proof}

As a consequence, we get
\begin{thm} \label{main 1}
Let $H$ be a skew brace and  let $I$ be a trivial skew brace with a fixed coupling $\alpha$.  Then there exists a bijection between $Ext_{\alpha}(H,I)$ and $Ext_{\alpha}(H, \Z(I))$.
\end{thm}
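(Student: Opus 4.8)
The plan is to deduce Theorem \ref{main 1} as a direct corollary of Theorem \ref{Main thm}, exploiting the fact that for a trivial skew brace $I$ one has $\Z(I,+)=\Soc(I)=\Ann(I)$, so that $\Z(I)$ (meaning the centre, which for a trivial skew brace is the whole additive centre) and $\Ann(I)$ coincide as sub skew braces. First I would recall that by Theorem \ref{Main thm} the group $\Ho^2_N(H,\Ann(I))$ acts on $\Ext_\alpha(H,I)$, and since $I$ is trivial this action is both free and transitive; hence $\Ext_\alpha(H,I)$ is a torsor over $\Ho^2_N(H,\Ann(I))$, and in particular any choice of base point $[\mathcal{E}_0]\in\Ext_\alpha(H,I)$ yields a bijection $\Ho^2_N(H,\Ann(I))\to\Ext_\alpha(H,I)$, $[(\beta_1,\tau_1)]\mapsto[(\beta_1,\tau_1)][\mathcal{E}_0]$.

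Next I would apply the same reasoning with $I$ replaced by $\Z(I)$. The point is that $\Z(I)$, being a trivial skew brace (a sub skew brace of the trivial skew brace $I$ is again trivial), has $\Ann(\Z(I))=\Z(I)$, and moreover $\Ann(I)=\Z(I)$ already, so $\Ho^2_N(H,\Ann(\Z(I)))=\Ho^2_N(H,\Z(I))=\Ho^2_N(H,\Ann(I))$. One must also check that the coupling $\alpha$ makes sense for both $I$ and $\Z(I)$: since the action of $H$ on a trivial skew brace is determined by $\nu,\mu,\sigma$ with values in $\Aut(I,+)$, and these restrict to $\Z(I)=\Ann(I)$ (indeed $\Ann(I)$ is characteristic), the coupling $\alpha$ restricts to a coupling from $H$ to $\Z(I)$, and $\Ext_\alpha(H,\Z(I))$ is likewise a torsor over $\Ho^2_N(H,\Ann(\Z(I)))=\Ho^2_N(H,\Ann(I))$ by Theorem \ref{Main thm} applied to the trivial skew brace $\Z(I)$.

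Putting these together: both $\Ext_\alpha(H,I)$ and $\Ext_\alpha(H,\Z(I))$ are nonempty torsors over the same abelian group $\Ho^2_N(H,\Ann(I))$ (nonemptiness of $\Ext_\alpha(H,I)$ follows because $\alpha$ is assumed to be a coupling, hence realized by some extension, e.g.\ via Theorem \ref{main}; for $\Z(I)$ nonemptiness is automatic since the class containing the split-type extension built from $(\chi,0,0)$ lies in $\Ext_\alpha(H,\Z(I))$). Any torsor over a group $G$ is in bijection with $G$ after choosing a base point, so $|\Ext_\alpha(H,I)|$ and $|\Ext_\alpha(H,\Z(I))|$ are both in bijection with $\Ho^2_N(H,\Ann(I))$, and composing these two bijections gives the required bijection $\Ext_\alpha(H,I)\cong\Ext_\alpha(H,\Z(I))$.

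The main obstacle I anticipate is not the torsor bookkeeping but verifying carefully that the coupling $\alpha$ genuinely transfers between $I$ and $\Z(I)$ and that the cohomology groups $\Ho^2_N(H,\Ann(I))$ appearing in the two applications of Theorem \ref{Main thm} are literally the same group — this hinges on the identities $\Ann(I)=\Z(I)$ and $\Ann(\Z(I))=\Z(I)$ for trivial $I$, together with the observation that $\Ann(I)$ is invariant under the maps $\nu_h,\mu_h,\sigma_h$ so the restricted actions are well defined. Once that compatibility is pinned down, the bijection is obtained by transport of structure along the common torsor, with no further computation needed.
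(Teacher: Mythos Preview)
Your proposal is correct and matches the paper's approach: the paper presents Theorem \ref{main 1} with no proof, simply as ``a consequence'' of Theorem \ref{Main thm}, and your torsor argument is precisely how that consequence is drawn. Both $\Ext_\alpha(H,I)$ and $\Ext_\alpha(H,\Z(I))$ are free transitive $\Ho^2_N(H,\Ann(I))$-sets (using $\Ann(I)=\Z(I)=\Ann(\Z(I))$ for trivial $I$), hence in bijection; your additional care about nonemptiness and about restricting the coupling to $\Z(I)$ just makes explicit what the paper leaves implicit.
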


\section{action of automorphism group on extensions}

Throughout this section we consider  $I$ to be a trivial skew brace and $\mathcal{E} := 0 \to I \stackrel{i}{\to}  E \stackrel{\pi}{\to} H \to 0$ be an extension of skew braces.  Then $ \nu :H \rightarrow Aut(I,+)$ as defined in (\ref{actions}),  is independent of the choice of an st-section.  Also $\bar{\nu}=  \nu$ and  $\bar{\mu}:(H,+) \rightarrow \Out(I)$,  $\bar{\sigma}: (H, \circ) \rightarrow \Out(I)$,  where $Out(I)$ represents the group of outer-automorphisms of $I$. For a pair  $(\phi, \theta) \in \Autb(H) \times \Autb(I)$ of skew brace automorphisms and an  extension  
$$\mathcal{E} :  0 \rightarrow I \stackrel{i}{\rightarrow} E \stackrel{\pi}{\rightarrow} H \rightarrow 0$$
of $H$ by $I$,  we can define a new extension
$$\mathcal{E}^{(\phi, \theta)} : 0 \rightarrow I \stackrel{i\theta}{\longrightarrow} E \stackrel{\phi^{-1} \pi}{\longrightarrow} H \rightarrow 0$$
of $H$ by $I$. Thus, for a given $(\phi, \theta) \in \Autb(H) \times \Autb(I)$, we can define a map from $\Ext(H, I)$ to itself given by 
\begin{equation}\label{act1 sb}
[\mathcal{E}] \mapsto  [ \mathcal{E}^{(\phi, \theta)}].
\end{equation}
 If $\phi$ and $\theta$ are identity automorphisms, than obviously $\mathcal{E}^{(\phi, \theta)} = \mathcal{E}$. It is also easy to see that  
$$[\mathcal{E}]  ^{(\phi_1, \theta_1) (\phi_2, \theta_2)}=  \big([\mathcal{E}]^{(\phi_1, \theta_1)}\big)^{(\phi_2, \theta_2)}.$$
We conclude that the association \eqref{act1 sb} gives an action of the group $\Autb(H) \times \Autb(I)$  on the set $\Ext(H, I)$. From Corollary \ref{cor 1} ,  we know that  $\Ext(H, I) = \bigsqcup_{(\nu, \bar{\mu}, \bar{\sigma})} \Ext_{(\nu, \bar{\mu}, \bar{\sigma})}(H, I)$.  \emph{Let $(\nu, \bar{\mu}, \bar{\sigma})$ be an arbitrary but fixed coupling from $H$ to $I$.}  Let $\C_{(\nu, \bar{\mu}, \bar{\sigma})}$ denote the stabiliser of $\Ext_{(\nu, \bar{\mu}, \bar{\sigma})}(H, I)$ in $\Autb(H) \times \Autb(I)$; more explicitly
$$\C_{(\nu, \bar{\mu}, \bar{\sigma})} = \{ (\phi, \theta) \in \Autb(H) \times \Autb(I) \mid \nu_h=\theta^{-1}\nu_{\phi(h)}\theta,  \bar{\mu}_h = \theta^{-1}\bar{\mu}_{\phi(h)}\theta \mbox{ and } \bar{\sigma}_h = \theta^{-1}\bar{\sigma}_{\phi(h)}\theta \}.$$ 
It is easy to see that $\C_{(\nu, \bar{\mu}, \bar{\sigma})}$  is a subgroup of $\Autb(H) \times \Autb(I)$.  For details see \cite[Pg.15]{NMY}.

Next we consider an action of $ \C_{(\nu, \bar{\mu,} \bar{\sigma})}$ on $\Ho^2_N(H, Z(I))$ (same as in \cite{NMY}) by 
\begin{equation} \label{act3 sb}
[(g, f)] \mapsto [\big(g^{(\phi, \theta)}, f^{(\phi, \theta)}\big)],
\end{equation}
where $g^{(\phi, \theta)}(h_1, h_2)=\theta^{-1}(g(\phi(h_1), \phi(h_2))$.  This action of $  \C_{(\nu, \bar{\mu,} \bar{\sigma})}$ on $\Ho^2_N(H, Z(I))$  is same as the action of $  \C_{(\nu, \bar{\mu,} \bar{\sigma})}$ on $\Ext_{(\nu, \bar{\mu}, \bar{\sigma})}(H, I)$  transferred on $\Ho^2_N(H, Z(I))$ through bijection of Theorem \ref{main 1} . Using this action we can define the semi-direct product $\Gamma =   \C_{(\nu, \bar{\mu,} \bar{\sigma})} \ltimes \Ho^2_N(H,Z(I))$.  We wish to define an action of $\Gamma$ on $\Ext_{(\nu, \bar{\mu}, \bar{\sigma})}(H, I)$. For $(c, h) \in \Gamma$ and $[\mathcal{E}] \in \Ext_{(\nu, \bar{\mu}, \bar{\sigma})}(H, I)$, define 
\begin{equation}\label{act4 sb}
[\mathcal{E}]^{(c, h)} = ([\mathcal{E}]^c)^h.
\end{equation}

\begin{lemma}\label{wells2 sb}
The rule  in \eqref{act4 sb}   gives an action of  $\Gamma$ on $\Ext_{(\nu, \bar{\mu}, \bar{\sigma})}(H, I)$.
\end{lemma}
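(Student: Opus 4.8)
The plan is to verify that \eqref{act4 sb} satisfies the two axioms of a group action: that the identity element of $\Gamma$ acts trivially, and that the composition rule $[\mathcal{E}]^{(c_1,h_1)(c_2,h_2)} = \big([\mathcal{E}]^{(c_1,h_1)}\big)^{(c_2,h_2)}$ holds. Recall that $\Gamma = \C_{(\nu, \bar{\mu}, \bar{\sigma})} \ltimes \Ho^2_N(H, \Z(I))$, so multiplication in $\Gamma$ is $(c_1, h_1)(c_2, h_2) = (c_1 c_2,\ h_1 \cdot (c_1 \cdot h_2))$, where $c_1 \cdot h_2$ denotes the action of $c_1 \in \C_{(\nu, \bar{\mu}, \bar{\sigma})}$ on $h_2 \in \Ho^2_N(H, \Z(I))$ given by \eqref{act3 sb}. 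The two building-block actions are already available: the action of $\Autb(H) \times \Autb(I)$ (hence of its subgroup $\C_{(\nu, \bar{\mu}, \bar{\sigma})}$) on $\Ext(H,I)$ via \eqref{act1 sb}, which restricts to an action on $\Ext_{(\nu, \bar{\mu}, \bar{\sigma})}(H,I)$ since $\C_{(\nu, \bar{\mu}, \bar{\sigma})}$ is precisely the stabiliser of that piece, and the free action of $\Ho^2_N(H, \Ann(I)) = \Ho^2_N(H, \Z(I))$ (as $I$ is trivial) on $\Ext_{\alpha}(H,I)$ from Theorem \ref{Main thm}.

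The identity axiom is immediate: if $c$ is the identity pair $(\Id, \Id)$ then $[\mathcal{E}]^c = [\mathcal{E}]$ by the remark following \eqref{act1 sb}, and if $h$ is the trivial cohomology class then $[\mathcal{E}]^h = [\mathcal{E}]$ because the class of $\mathcal{E}(\chi, \beta, \tau)$ is unchanged when $(\beta, \tau)$ is modified by a coboundary, which is exactly what the trivial element of $\Ho^2_N$ represents. For the composition axiom, first I would unwind the left-hand side using the definition: $[\mathcal{E}]^{(c_1 c_2,\ h_1 \cdot (c_1 \cdot h_2))} = \big([\mathcal{E}]^{c_1 c_2}\big)^{h_1 \cdot (c_1 \cdot h_2)}$. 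Since \eqref{act1 sb} is already a genuine action, $[\mathcal{E}]^{c_1 c_2} = \big([\mathcal{E}]^{c_1}\big)^{c_2}$. The crux is then to establish the \emph{compatibility relation}
$$
\big([\mathcal{E}]^{c}\big)^{c \cdot h} = \big([\mathcal{E}]^{h}\big)^{c}
$$
for $c \in \C_{(\nu, \bar{\mu}, \bar{\sigma})}$ and $h \in \Ho^2_N(H, \Z(I))$ — equivalently, that the $\C_{(\nu, \bar{\mu}, \bar{\sigma})}$-action on $\Ho^2_N(H, \Z(I))$ defined in \eqref{act3 sb} is exactly the one obtained by transporting the $\C_{(\nu, \bar{\mu}, \bar{\sigma})}$-action on $\Ext_{(\nu, \bar{\mu}, \bar{\sigma})}(H,I)$ through the bijection of Theorem \ref{main 1} (equivalently Theorem \ref{Main thm}). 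This is asserted in the paragraph preceding the lemma, so I would cite it; if a self-contained argument is wanted, one picks an st-section $s$ of $\mathcal{E}$ realising an associated triplet $(\chi, \beta, \tau)$, checks that $\phi^{-1} s \phi$ (suitably adjusted by $\theta$) is an st-section of $\mathcal{E}^{c}$ with associated triplet whose cocycle part is $(\beta^{(\phi,\theta)}, \tau^{(\phi,\theta)})$, and then observes that adding the cocycle representing $h$ before or after this transport differs only by the transported cocycle $c \cdot h$, which is the content of \eqref{act3 sb}. Granting the compatibility relation, the composition axiom follows by a short formal computation:
\begin{align*}
\big([\mathcal{E}]^{(c_1, h_1)}\big)^{(c_2, h_2)}
&= \Big(\big([\mathcal{E}]^{c_1}\big)^{h_1}\Big)^{(c_2, h_2)}
= \Big(\big(\big([\mathcal{E}]^{c_1}\big)^{h_1}\big)^{c_2}\Big)^{h_2}
= \Big(\big([\mathcal{E}]^{c_1}\big)^{c_2\cdot( c_2^{-1}\cdot\, ? )}\Big)^{h_2},
\end{align*}
so more cleanly: using the compatibility relation in the form $\big(Y^{c_2}\big)^{h_2} = \big(Y^{c_2^{-1}\cdot h_2}\big)^{c_2}$ is awkward; instead I would apply it with $Y = \big([\mathcal{E}]^{c_1}\big)^{h_1}$ to get $\big(Y^{h_1 \text{-part already absorbed}}\big)$ — the right normalization is to write $\big([\mathcal{E}]^{(c_1,h_1)}\big)^{(c_2,h_2)} = \big(\big([\mathcal{E}]^{c_1}\big)^{h_1}\big)^{c_2 h_2}$, move $c_2$ past $h_1$ via compatibility to obtain $\big(\big([\mathcal{E}]^{c_1 c_2}\big)^{c_2^{-1}\cdot h_1}\big)^{h_2}$, and then combine the two $\Ho^2_N$-actions (which commute as an abelian group acting freely, hence the combined effect is addition) to reach $\big([\mathcal{E}]^{c_1 c_2}\big)^{(c_2^{-1}\cdot h_1) + h_2}$; comparing with the left-hand side $\big([\mathcal{E}]^{c_1 c_2}\big)^{h_1 \cdot (c_1 \cdot h_2)}$ and using that the semidirect-product multiplication convention matches, the two agree.

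The main obstacle is bookkeeping with the semidirect-product convention: one must fix whether $\Gamma$'s multiplication is $(c_1,h_1)(c_2,h_2) = (c_1c_2,\ h_1 + c_1\cdot h_2)$ or the opposite, and then make the direction in which $c$ is commuted past $h$ in the compatibility relation match that convention exactly, so that the additive combination of cohomology classes comes out on the correct side. Once the convention is pinned down and the compatibility relation $\big([\mathcal{E}]^{c}\big)^{c \cdot h} = \big([\mathcal{E}]^{h}\big)^{c}$ is in hand (which is really a restatement of the fact that the bijection of Theorem \ref{main 1} is $\C_{(\nu, \bar{\mu}, \bar{\sigma})}$-equivariant), the verification is purely formal and the freeness of the $\Ho^2_N$-action guarantees that combining consecutive shifts by cohomology classes adds them. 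I expect the proof to be short, essentially quoting the equivariance statement already made in the text just before the lemma and then doing the one-line formal manipulation.
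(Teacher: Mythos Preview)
The paper's own proof is a one-line citation to \cite[Lemma~5.2]{NMY}, so there is no detailed argument here to compare against; your outline is precisely the standard verification one would expect that citation to point to, and you correctly identify the only nontrivial ingredient, namely the equivariance relation $\big([\mathcal{E}]^{h}\big)^{c} = \big([\mathcal{E}]^{c}\big)^{h^{c}}$, which the paper asserts in the sentence immediately preceding the lemma.

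That said, your displayed computation is garbled: you begin one chain of equalities, abandon it mid-line with ``so more cleanly'', restart, and end with a comparison in which the exponent on one side is $h_1\cdot(c_1\cdot h_2)$ and on the other is $(c_2^{-1}\cdot h_1)+h_2$; these do not visibly agree, and indeed they do not agree under the convention $(c_1,h_1)(c_2,h_2)=(c_1c_2,\,h_1+c_1\cdot h_2)$ you wrote at the outset. The issue is exactly the one you flag at the end: you never fixed a single convention and carried it through. With the definition $[\mathcal{E}]^{(c,h)}=([\mathcal{E}]^c)^h$ from \eqref{act4 sb} and the compatibility $\big([\mathcal{E}]^{h}\big)^{c}=\big([\mathcal{E}]^{c}\big)^{h^{c}}$, the computation forces
\[
\big([\mathcal{E}]^{(c_1,h_1)}\big)^{(c_2,h_2)}
=\Big(\big(([\mathcal{E}]^{c_1})^{h_1}\big)^{c_2}\Big)^{h_2}
=\big([\mathcal{E}]^{c_1c_2}\big)^{\,h_1^{c_2}+h_2},
\]
so the correct semidirect-product law here is $(c_1,h_1)(c_2,h_2)=(c_1c_2,\,h_1^{c_2}+h_2)$, not the one you stated. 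Fix this convention at the start, delete the false starts, and the proof becomes the clean two-line check you predicted.
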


\begin{proof}
The proof follows on the  lines of   \cite[Lemma 5.2]{NMY}.

\end{proof}

 Let $[\mathcal{E}] \in \Ext_{(\nu, \bar{\mu}, \bar{\sigma})}(H, I)$ be a fixed extension.  Since the action of $\Ho^2_N(H,Z(I))$ on $\Ext_{(\nu, \bar{\mu}, \bar{\sigma})}(H, I)$ is transitive and faithful, for  each $c \in \C_{(\nu, \bar{\mu,} \bar{\sigma})}$, there exists a unique element (say) $h_c$  in  $\Ho^2_N(H,Z(I))$ such that  
 $$[\mathcal{E}]^{c} = [\mathcal{E}]^{h_c}.$$
 We thus have a well defined map $ \omega(\mathcal{E}): \C_{(\nu, \bar{\mu,} \bar{\sigma})} \rightarrow \Ho^2_N(H,Z(I))$ given by
 \begin{equation}\label{wells-map sb}
 \omega(\mathcal{E})(c)=h_c
 \end{equation}
 for $c \in \C_{(\nu, \bar{\mu,} \bar{\sigma})}$. 
 
\begin{lemma}\label{wells3 sb}
The map $ \omega(\mathcal{E}): \C_{(\nu, \bar{\mu}, \bar{\sigma})} \rightarrow \Ho^2_N(H,Z(I))$ defined in \eqref{wells-map sb} is a derivation with respect to the action of $\C_{(\nu, \bar{\mu}, \bar{\sigma})}$ on $\Ho^2_N(H,Z(I))$ given in \eqref{act3 sb}.
\end{lemma}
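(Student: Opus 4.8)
The plan is to verify the cocycle (derivation) identity $\omega(\mathcal{E})(c_1 c_2) = \omega(\mathcal{E})(c_1) \cdot {}^{c_1}\omega(\mathcal{E})(c_2)$ directly from the definition of $\omega(\mathcal{E})$ and the fact that the action of $\Ho^2_N(H, \Z(I))$ on $\Ext_{(\nu, \bar{\mu}, \bar{\sigma})}(H, I)$ is free and transitive. Here I am writing the group $\Ho^2_N(H,\Z(I))$ multiplicatively for the action and using ${}^{c}h$ for the image of $h$ under the action of $c \in \C_{(\nu, \bar{\mu}, \bar{\sigma})}$ described in \eqref{act3 sb}; the key structural fact I will use repeatedly is that \eqref{act4 sb} makes $\Gamma = \C_{(\nu, \bar{\mu}, \bar{\sigma})} \ltimes \Ho^2_N(H, \Z(I))$ act on $\Ext_{(\nu, \bar{\mu}, \bar{\sigma})}(H, I)$, which was established in Lemma \ref{wells2 sb}.

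First I would fix $[\mathcal{E}] \in \Ext_{(\nu, \bar{\mu}, \bar{\sigma})}(H, I)$ and recall that by definition $[\mathcal{E}]^c = [\mathcal{E}]^{h_c}$ where $h_c = \omega(\mathcal{E})(c)$, and that $h_c$ is the \emph{unique} such element because the $\Ho^2_N(H,\Z(I))$-action is free and transitive. Then I would compute $[\mathcal{E}]^{c_1 c_2}$ in two ways. On one hand, since the association $c \mapsto [\mathcal{E}]^c$ is a (right) action of $\C_{(\nu, \bar{\mu}, \bar{\sigma})}$ — this is part of what \eqref{act1 sb} and Lemma \ref{wells2 sb} give us — we have $[\mathcal{E}]^{c_1 c_2} = ([\mathcal{E}]^{c_1})^{c_2}$. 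Now $[\mathcal{E}]^{c_1} = [\mathcal{E}]^{h_{c_1}}$, so I need to understand $([\mathcal{E}]^{h_{c_1}})^{c_2}$. The crucial compatibility, coming from the semidirect product structure of $\Gamma$ acting on $\Ext_{(\nu, \bar{\mu}, \bar{\sigma})}(H, I)$, is the relation $([\mathcal{F}]^{h})^{c} = ([\mathcal{F}]^{c})^{({}^{c^{-1}}h)}$ for any extension $[\mathcal{F}]$, or equivalently $([\mathcal{F}]^c)^{h'} = ([\mathcal{F}]^{{}^{c}h'})^c$ — this is exactly the statement that the twisted action of $\Gamma$ is well-defined, i.e. that $(c,0)(0,h)(c,0)^{-1} = (0, {}^c h)$ in $\Gamma$. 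Applying this with $[\mathcal{F}] = [\mathcal{E}]$: from $[\mathcal{E}]^{c_1} = [\mathcal{E}]^{h_{c_1}}$ I get $([\mathcal{E}]^{c_1})^{c_2} = ([\mathcal{E}]^{h_{c_1}})^{c_2} = ([\mathcal{E}]^{c_2})^{{}^{c_2^{-1}}h_{c_1}}$. Hmm — to keep the bookkeeping clean I would instead push everything through $[\mathcal{E}]$ directly: write $[\mathcal{E}]^{c_1 c_2} = ([\mathcal{E}]^{h_{c_1}})^{c_2}$ and use the semidirect relation in the form that moves $c_2$ past $h_{c_1}$, then use $[\mathcal{E}]^{c_2} = [\mathcal{E}]^{h_{c_2}}$, obtaining $[\mathcal{E}]^{c_1 c_2} = [\mathcal{E}]^{h_{c_2} \cdot ({}^{c_1}h_{c_1})}$ up to correctly tracking which $c_i$ conjugates which $h_j$; on the other hand $[\mathcal{E}]^{c_1 c_2} = [\mathcal{E}]^{h_{c_1 c_2}}$. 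By freeness, the two exponents in $\Ho^2_N(H,\Z(I))$ must agree, which yields $h_{c_1 c_2} = h_{c_1} \cdot {}^{c_1} h_{c_2}$ (after fixing conventions), i.e. the derivation identity for $\omega(\mathcal{E})$.

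The main obstacle, and where care is genuinely needed, is getting the conjugation conventions consistent: whether the action in \eqref{act3 sb} is a left or right action, whether $c \mapsto [\mathcal{E}]^c$ is a left or right action, and correspondingly whether the semidirect product $\Gamma = \C_{(\nu, \bar{\mu}, \bar{\sigma})} \ltimes \Ho^2_N(H,\Z(I))$ is arranged so that the derivation identity comes out as $\omega(c_1 c_2) = \omega(c_1) + {}^{c_1}\omega(c_2)$ rather than the reversed version. I would resolve this by adopting exactly the conventions of \cite[Lemma 5.3]{NMY}, to which this lemma is the skew-brace analogue: once the action of $\Gamma$ on $\Ext_{(\nu, \bar{\mu}, \bar{\sigma})}(H, I)$ from Lemma \ref{wells2 sb} is in hand and is known to be a genuine action, the derivation property of $\omega(\mathcal{E})$ is a formal consequence of freeness plus transitivity, with no skew-brace-specific computation required. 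So after setting conventions I would simply state that the proof is identical to \cite[Lemma 5.3]{NMY}, or spell out the three-line exponent comparison above.
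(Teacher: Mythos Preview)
Your proposal is correct and matches the paper's approach: the paper's entire proof is the single line ``The proof follows on the lines of \cite[Lemma 5.3]{NMY},'' and your sketch is precisely the standard free-and-transitive exponent comparison underlying that reference. If anything you have written more than the paper does; your caveats about left/right conventions are appropriate, and once those are fixed to match \cite{NMY} the argument is exactly as you describe.
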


\begin{proof}
The proof follows on the  lines of  \cite[Lemma  5.3]{NMY}.
\end{proof}

Let 
$$\mathcal{E}: 0 \rightarrow I \rightarrow E \overset{\pi}\rightarrow H $$
be an extension of a left skew brace $H$ by a trivial skew brace $I$ such that $[\mathcal{E}] \in \Ext_{(\nu, \mu, \sigma)}(H,I)$.
Let  $\Autb_I(E)$ denote the subgroup of $\Autb(E)$ consisting of all automorphisms of $E$ which normalize $I$, that is,
$$\Autb_I(E) := \{ \gamma \in \Autb(E) \mid \gamma(y) \in I \mbox{ for all }  y \in I\}.$$ 
For $\gamma \in \Autb_I(E)$, set $\gamma_I := \gamma |_I$, the restriction of $\gamma$ to $I$, and $\gamma_H$ to be the automorphism of $H$ induced by $\gamma$. More precisely, $\gamma_H(h) = \pi(\gamma(s(h)))$ for all $h \in H$, where $s$ is an st-section of $\pi$. Notice that the definition of $\gamma_H$ is independent of  the choice of an st-section. Define a map $\rho(\mathcal{E}) :  \Autb_I(E) \rightarrow  \Autb(H) \times \Autb(I)$ by
$$\rho(\mathcal{E})(\gamma)=(\gamma_H, \gamma_I).$$ 
Although $\omega(\mathcal{E})$ is not a homomorphism, but we can still talk about its set theoretic kernel, that is,
$$\Ker(\omega(\mathcal{E})) = \{c \in C_{(\nu, \bar{\mu},\bar{\sigma})} \mid [\mathcal{E}]^c=[\mathcal{E}]\}.$$

\begin{prop}\label{wells4 sb}
For the extension $\mathcal{E}$,   $\IM(\rho(\mathcal{E})) \subseteq \C_{(\nu, \bar{\mu}, \bar{\sigma})}$ and  
$\IM(\rho(\mathcal{E})) = \Ker(\omega(\mathcal{E}))$.
\end{prop}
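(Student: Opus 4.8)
The plan is to prove the two assertions in turn, in both cases by transporting the extension structure along an automorphism of $E$. The one identity that does all the work is $\gamma_H \circ \pi = \pi \circ \gamma$ for every $\gamma \in \Autb_I(E)$, which I would record first: writing an arbitrary $x \in E$ as $x = s(h) + y$ with $h = \pi(x)$ and $y \in I$, additivity of $\gamma$ and of $\pi$ together with $\gamma(y) \in I = \Ker \pi$ give $\pi(\gamma(x)) = \pi(\gamma(s(h))) = \gamma_H(h) = \gamma_H(\pi(x))$.

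For the inclusion $\IM(\rho(\mathcal{E})) \subseteq \C_{(\nu, \bar{\mu}, \bar{\sigma})}$: fix $\gamma \in \Autb_I(E)$ and an st-section $s$ of $\pi$. Then $s' := \gamma \circ s \circ \gamma_H^{-1}$ is again an st-section, since $\pi(s'(h)) = h$ by the identity above and $s'(0) = \gamma(s(0)) = 0$. Plugging $s'$ into \eqref{actions} and using that $\gamma$ is a skew brace homomorphism with $\gamma|_I = \gamma_I$, a short computation gives that the action $(\nu', \mu', \sigma')$ attached to $s'$ satisfies $\nu'_h = \gamma_I\, \nu_{\gamma_H^{-1}(h)}\, \gamma_I^{-1}$, and likewise $\mu'_h = \gamma_I\, \mu_{\gamma_H^{-1}(h)}\, \gamma_I^{-1}$ and $\sigma'_h = \gamma_I\, \sigma_{\gamma_H^{-1}(h)}\, \gamma_I^{-1}$. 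Since $I$ is a trivial skew brace, $\nu$ does not depend on the chosen st-section, so $\nu' = \nu$; and since the coupling is independent of the st-section by Proposition \ref{well-def-act-coc}, and conjugation by the fixed automorphism $\gamma_I$ carries $\Inn(I, +)$ and $\Inn(I, \circ)$ into themselves, the last two equalities descend modulo inner automorphisms to $\bar{\mu}' = \bar{\mu}$ and $\bar{\sigma}' = \bar{\sigma}$. Substituting $\gamma_H(h)$ for $h$ then rewrites the resulting three identities as $\nu_h = \gamma_I^{-1} \nu_{\gamma_H(h)} \gamma_I$, $\bar{\mu}_h = \gamma_I^{-1} \bar{\mu}_{\gamma_H(h)} \gamma_I$ and $\bar{\sigma}_h = \gamma_I^{-1} \bar{\sigma}_{\gamma_H(h)} \gamma_I$, which is precisely the statement $(\gamma_H, \gamma_I) \in \C_{(\nu, \bar{\mu}, \bar{\sigma})}$.

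For $\IM(\rho(\mathcal{E})) = \Ker(\omega(\mathcal{E}))$ I would argue by double inclusion. If $\gamma \in \Autb_I(E)$, then $\gamma^{-1} : E \to E$ is a skew brace isomorphism with $\gamma^{-1}(\gamma_I(y)) = y$ for all $y \in I$ and, by the opening identity, $\pi \circ \gamma^{-1} = \gamma_H^{-1} \circ \pi$; hence $\gamma^{-1}$ is an equivalence of extensions from $\mathcal{E}^{(\gamma_H, \gamma_I)} : 0 \to I \stackrel{i\gamma_I}{\to} E \stackrel{\gamma_H^{-1}\pi}{\to} H \to 0$ to $\mathcal{E}$, so $[\mathcal{E}]^{(\gamma_H, \gamma_I)} = [\mathcal{E}]$ and $(\gamma_H, \gamma_I) \in \Ker(\omega(\mathcal{E}))$ by the definition of that set. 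Conversely, take $c = (\phi, \theta) \in \Ker(\omega(\mathcal{E}))$, so $\mathcal{E}^{(\phi, \theta)}$ is equivalent to $\mathcal{E}$ via a skew brace isomorphism $\delta : E \to E$ with $\delta(\theta(y)) = y$ for all $y \in I$ and $\pi \circ \delta = \phi^{-1} \circ \pi$. Set $\gamma := \delta^{-1}$; then $\gamma(y) = \theta(y) \in I$ for all $y \in I$, so $\gamma \in \Autb_I(E)$ with $\gamma_I = \theta$, and from $\phi \circ \pi = \pi \circ \gamma$ we get $\gamma_H(h) = \pi(\gamma(s(h))) = \phi(\pi(s(h))) = \phi(h)$, so $\gamma_H = \phi$. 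Thus $c = \rho(\mathcal{E})(\gamma) \in \IM(\rho(\mathcal{E}))$, and the two sets agree.

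I expect the only genuine difficulty to be notational bookkeeping: keeping the identification of $I$ with $i(I) \subseteq E$ consistent so that $i$, $\gamma_I$ and $\theta$ compose as intended, and keeping the direction of each conjugation straight so that the final relations land in exactly the form defining $\C_{(\nu, \bar{\mu}, \bar{\sigma})}$ and not its opposite. Isolating the identity $\gamma_H \circ \pi = \pi \circ \gamma$ at the outset makes the rest routine.
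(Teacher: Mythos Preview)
Your argument is correct and complete. The paper itself does not give a proof here but only cites \cite[Proposition 5.4]{NMY}; your write-up is exactly the standard transport-of-structure argument that underlies that reference, so the approaches coincide.
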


\begin{proof}
The proof follows on the  lines of \cite[Proposition 5.4]{NMY}.
\end{proof}

Continuing with the above setting, set $\Autb^{H, I}(E) := \{\gamma \in \Autb_I(E) \mid \gamma_I = \Id,  \gamma_H = \Id\}$. Notice that  $\Autb^{H,I}(E)$ is precisely the kernel of $\rho(\mathcal{E})$. Hence, using Proposition \ref{wells4 sb}, we get

\begin{thm}\label{wells5 sb}
Let $\mathcal{E}: 0 \rightarrow I \rightarrow E \overset{\pi}\rightarrow H$ be a extension of a left skew  brace  $H$ by a trivial skew brace $I$ such that $[\mathcal{E}] \in \Ext_{(\nu, \bar{\mu}, \bar{\sigma})}(H,I)$. Then we have the following exact sequence of groups 
$$0 \rightarrow \Autb^{H,I}(E) \rightarrow \Autb_I(E) \stackrel{\rho(\mathcal{E})}{\longrightarrow} \C_{(\nu,\bar{\mu}, \bar{\sigma})} \stackrel{\omega(\mathcal{E})}{\longrightarrow} \Ho^2_N(H,Z(I)),$$
where $\omega(\mathcal{E})$ is, in general,  only a derivation.
\end{thm}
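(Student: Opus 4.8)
The plan is to assemble the exact sequence from the pieces that have already been established, checking exactness at each of the three non-trivial spots. By construction $\Autb^{H,I}(E)$ is the kernel of $\rho(\mathcal{E})$, so exactness at $\Autb_I(E)$ is immediate and the injectivity of the inclusion $\Autb^{H,I}(E) \hookrightarrow \Autb_I(E)$ is obvious; this gives the left end of the sequence for free. The real content is exactness at $\C_{(\nu,\bar{\mu},\bar{\sigma})}$, which is exactly the assertion $\IM(\rho(\mathcal{E})) = \Ker(\omega(\mathcal{E}))$ proved in Proposition \ref{wells4 sb}. So the theorem is essentially a repackaging of Proposition \ref{wells4 sb} together with the definitional fact that $\Autb^{H,I}(E) = \Ker(\rho(\mathcal{E}))$, and the final clause about $\omega(\mathcal{E})$ being a derivation is Lemma \ref{wells3 sb}.

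Concretely, I would proceed as follows. First I would recall that $\rho(\mathcal{E})$ is a well-defined group homomorphism from $\Autb_I(E)$ into $\Autb(H)\times\Autb(I)$ whose image lands inside $\C_{(\nu,\bar{\mu},\bar{\sigma})}$ — the containment in $\C_{(\nu,\bar{\mu},\bar{\sigma})}$ is the first half of Proposition \ref{wells4 sb}, and the fact that $\gamma_H$ and $\gamma_I$ are independent of the chosen st-section was noted just before the statement. Second, I would identify $\Ker(\rho(\mathcal{E}))$: an automorphism $\gamma$ lies in the kernel precisely when $\gamma_I = \Id$ and $\gamma_H = \Id$, which is exactly the defining condition for $\Autb^{H,I}(E)$; hence the sequence $0 \to \Autb^{H,I}(E) \to \Autb_I(E) \xrightarrow{\rho(\mathcal{E})} \C_{(\nu,\bar{\mu},\bar{\sigma})}$ is exact. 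Third, I would invoke Proposition \ref{wells4 sb} in its full strength to get $\IM(\rho(\mathcal{E})) = \Ker(\omega(\mathcal{E}))$, where the set-theoretic kernel of $\omega(\mathcal{E})$ is $\{c \in \C_{(\nu,\bar{\mu},\bar{\sigma})} \mid [\mathcal{E}]^c = [\mathcal{E}]\}$; this is precisely exactness of the sequence at the spot $\C_{(\nu,\bar{\mu},\bar{\sigma})}$. Finally, I would append the remark that $\omega(\mathcal{E})$ is only a derivation (not a homomorphism) for the action of $\C_{(\nu,\bar{\mu},\bar{\sigma})}$ on $\Ho^2_N(H,Z(I))$, citing Lemma \ref{wells3 sb}.

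Since every ingredient is already available, there is no genuine obstacle in the proof of the theorem itself; the work has been front-loaded into Lemma \ref{wells3 sb} and Proposition \ref{wells4 sb}. If I were forced to name a delicate point, it is the need to be careful that the transitivity and faithfulness of the $\Ho^2_N(H,Z(I))$-action on $\Ext_{(\nu,\bar{\mu},\bar{\sigma})}(H,I)$ — which requires $I$ to be a trivial skew brace (Theorem \ref{Main thm}) — is genuinely used, because that is what makes $h_c$ in \eqref{wells-map sb} well defined and hence $\omega(\mathcal{E})$ a map at all. So in writing the proof I would explicitly flag that the triviality hypothesis on $I$ is what licenses the definition of $\omega(\mathcal{E})$ and thus the last arrow of the sequence. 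With that, the proof reduces to two sentences: exactness at $\Autb_I(E)$ holds because $\Autb^{H,I}(E) = \Ker\rho(\mathcal{E})$, and exactness at $\C_{(\nu,\bar{\mu},\bar{\sigma})}$ is Proposition \ref{wells4 sb}.
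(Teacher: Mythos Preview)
Your proposal is correct and follows essentially the same approach as the paper: the paper's proof consists of the single observation that $\Autb^{H,I}(E)$ is by definition the kernel of $\rho(\mathcal{E})$, after which exactness at $\C_{(\nu,\bar{\mu},\bar{\sigma})}$ is read off directly from Proposition \ref{wells4 sb}. Your additional remarks on the role of the triviality hypothesis on $I$ and the citation of Lemma \ref{wells3 sb} for the derivation clause are accurate elaborations but add nothing new to the argument.
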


Further we have
\begin{prop}\label{wells6 sb}
Let  $\mathcal{E} :  0 \rightarrow I \rightarrow E \overset{\pi}\rightarrow H$ be an extension of $H$ by $I$ such that $[\mathcal{E}] \in \Ext_{(\nu, \bar{\mu}, \bar{\sigma})}(H, I)$.  Then  $\Autb^{H,I}(E) \cong \Z^1_N(H,Z(I))$.
\end{prop}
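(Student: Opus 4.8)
The plan is to exhibit an explicit bijection between $\Autb^{H,I}(E)$ and $\Z^1_N(H,Z(I))$ and check it respects the group structures. Fix an st-section $s$ of $\pi$, so every element of $E$ is uniquely $s(h)+y$ with $h\in H$, $y\in I$. Given $\gamma\in\Autb^{H,I}(E)$, since $\gamma_H=\Id$ we have $\pi(\gamma(s(h)))=h$, so $\gamma(s(h))=s(h)+\theta_\gamma(h)$ for a unique $\theta_\gamma(h)\in I$, and $\theta_\gamma(0)=0$ because $\gamma(0)=0$. Moreover $\gamma_I=\Id$ forces $\gamma(y)=y$ for all $y\in I$, so $\gamma$ is completely determined by $\theta_\gamma$ via $\gamma(s(h)+y)=s(h)+\theta_\gamma(h)+y$ (using additivity of $\gamma$ and $I$ abelian/central). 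First I would record that $\theta_\gamma$ actually takes values in $Z(I)$: apply $\gamma$ to $y+s(h)=s(h)+\mu_h(y)$ (or conjugate $s(h)+y$ by elements of $I$ in both group operations) to see that $\theta_\gamma(h)$ must commute with every element of $I$ in both `$+$' and `$\circ$', i.e.\ lies in $\Ann(I)=Z(I)$ since $I$ is a trivial skew brace.

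Next I would derive the cocycle conditions. Write the requirement that $\gamma$ is additive: evaluate $\gamma(s(h_1)+s(h_2))$ in two ways. On one hand $s(h_1)+s(h_2)=s(h_1+h_2)+\beta(h_1,h_2)$ (using the definition \eqref{cocycle1 sb}), so applying $\gamma$ gives $s(h_1+h_2)+\theta_\gamma(h_1+h_2)+\beta(h_1,h_2)$; on the other hand $\gamma(s(h_1))+\gamma(s(h_2))=(s(h_1)+\theta_\gamma(h_1))+(s(h_2)+\theta_\gamma(h_2))$, which after moving $\theta_\gamma(h_1)$ past $s(h_2)$ via $\mu_{h_2}$ and using that everything relevant is central becomes $s(h_1+h_2)+\beta(h_1,h_2)+\mu_{h_2}(\theta_\gamma(h_1))+\theta_\gamma(h_2)$ (recalling $\nu_h$ is trivial here since $I$ is trivial, so $\prescript{}{1}{\nu}$-twists disappear). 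Comparing yields $\theta_\gamma(h_1+h_2)=\mu_{h_2}(\theta_\gamma(h_1))+\theta_\gamma(h_2)$. Similarly, multiplicativity $\gamma(s(h_1)\circ s(h_2))=\gamma(s(h_1))\circ\gamma(s(h_2))$, together with $s(h_1)\circ s(h_2)=s(h_1\circ h_2)\circ\tau(h_1,h_2)$ from \eqref{cocycle2 sb}, gives $\theta_\gamma(h_1\circ h_2)=\sigma_{h_2}(\theta_\gamma(h_1))+\theta_\gamma(h_2)$ (written additively in $Z(I)$). These are exactly the two defining relations of $\Z^1_N(H,Z(I))$ as displayed in the definition of $\Z^1_N$ in the excerpt (with $\nu$ trivial), so $\theta_\gamma\in\Z^1_N(H,Z(I))$.

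Conversely, given $\theta\in\Z^1_N(H,Z(I))$ I would define $\gamma_\theta(s(h)+y):=s(h)+\theta(h)+y$ and verify, reversing the computations above, that $\gamma_\theta$ is a skew brace endomorphism of $E$; bijectivity follows since $\theta\mapsto$ "the map $h\mapsto\theta(h)^{-1}$-twist" or more simply since $\gamma_{-\theta}$ (additive inverse in $Z(I)$, which works because the cocycle conditions are linear) is a two-sided inverse — here I should double-check that $-\theta$ again satisfies the two relations, which is immediate by linearity of $\mu_{h_2}$, $\sigma_{h_2}$. Clearly $(\gamma_\theta)_I=\Id$ and $(\gamma_\theta)_H=\Id$, so $\gamma_\theta\in\Autb^{H,I}(E)$, and $\theta\mapsto\gamma_\theta$, $\gamma\mapsto\theta_\gamma$ are mutually inverse. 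Finally I would check this bijection is a group homomorphism: for $\gamma,\gamma'$ with parameters $\theta,\theta'$, compute $\gamma\circ\gamma'(s(h)+y)=\gamma(s(h)+\theta'(h)+y)=s(h)+\theta(h)+\theta'(h)+y$, so the composite corresponds to $\theta+\theta'$; hence $\Autb^{H,I}(E)\cong\Z^1_N(H,Z(I))$ as groups. The only real subtlety — the step I expect to need the most care — is the bookkeeping of which $\nu,\mu,\sigma$-twists survive: because $I$ is a trivial skew brace one has $\nu_h=\Id$ (as noted at the start of this section) and $Z(I)=\Ann(I)$ is central for both operations, which collapses the general equivalence formulas \eqref{equi 1}, \eqref{equi 2} to the clean cocycle identities above; getting that simplification exactly right, rather than the generic non-abelian mess, is where one must be attentive.
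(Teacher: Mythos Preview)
There is one concrete mistake that breaks the identification you make at the end. You assert that ``$\nu_h=\Id$ since $I$ is trivial (as noted at the start of this section)''. That is not what the section says, and it is false. Triviality of $I$ means $\lambda_y=\Id$ for $y\in I$, so the normal subgroup $N=\langle \lambda_y\rangle$ is trivial and hence $\bar\nu=\nu$; but $\nu_h=\lambda_{s(h)}|_I$ with $s(h)\in E$, and there is no reason for this to be the identity. Your additive cocycle computation is fine and gives
\[
\theta_\gamma(h_1+h_2)=\mu_{h_2}(\theta_\gamma(h_1))+\theta_\gamma(h_2),
\]
but this is \emph{not} the defining additive relation of $\Z^1_N(H,Z(I))$, which reads $\nu_{h_1+h_2}(\theta(h_1+h_2))=\mu_{h_2}(\nu_{h_1}(\theta(h_1)))+\nu_{h_2}(\theta(h_2))$. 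Likewise your multiplicative computation, once $\nu$ is retained, does not reduce to $\theta_\gamma(h_1\circ h_2)=\sigma_{h_2}(\theta_\gamma(h_1))+\theta_\gamma(h_2)$: there are residual $\nu$--twists.

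The fix is exactly the choice the paper makes: parametrize multiplicatively, $\gamma(s(h)\circ y)=s(h)\circ\lambda(h)\circ y$. The relation between the two pictures is $\theta_\gamma(h)=\nu_h(\lambda(h))$, and substituting this into your additive identity yields precisely $\nu_{h_1+h_2}(\lambda(h_1+h_2))=\mu_{h_2}(\nu_{h_1}(\lambda(h_1)))+\nu_{h_2}(\lambda(h_2))$, while the multiplicative identity cleans up (using that $\nu:(H,\circ)\to\Aut(I,+)$ is now a genuine homomorphism) to $\lambda(h_1\circ h_2)=\sigma_{h_2}(\lambda(h_1))+\lambda(h_2)$ --- i.e.\ exactly $\Z^1_N(H,Z(I))$. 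So your overall strategy is sound, but the bijection you wrote down lands in the wrong set of $1$-cocycles; compose it with $\theta\mapsto (h\mapsto\nu_h^{-1}\theta(h))$, or work with $\circ$ from the start as the paper does.
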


\begin{proof}
The map $\psi : \Z^1_N(H,Z(I)) \rightarrow \Autb^{H,I}(E)$ defined by $\psi(\lambda)(s(h) \circ y)= s(h) \circ \lambda(h) \circ y$ is the  required isomorphism. Rest proof follows on the lines of \cite[Proposition 5.6]{NMY}
\end{proof}

We finally get the following Wells' like exact sequence for skew braces.
\begin{thm}\label{wells7 sb}
Let $\mathcal{E}: 0 \rightarrow I \rightarrow E \overset{\pi}\rightarrow H$ be an extension of a left skew brace  $H$ by a trivial skew  brace $I$ such that $[\mathcal{E}] \in \Ext_{(\nu, \bar{\mu}, \bar{\sigma})}(H,I)$. Then we have the following exact sequence of groups 
$$0 \rightarrow \Z^1_N(H,\Z(I)) \rightarrow \Autb_I(E) \stackrel{\rho(\mathcal{E})}{\longrightarrow} \C_{(\nu,\bar{\mu}, \bar{\sigma})} \stackrel{\omega(\mathcal{E})}{\longrightarrow} \Ho^2_N(H,\Z(I)),$$
where $\omega(\mathcal{E})$ is, in general,  only a derivation.
\end{thm}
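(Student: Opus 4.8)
The plan is to assemble Theorem~\ref{wells7 sb} from the pieces already in place. By Theorem~\ref{wells5 sb} we already have the exact sequence
$$0 \rightarrow \Autb^{H,I}(E) \rightarrow \Autb_I(E) \stackrel{\rho(\mathcal{E})}{\longrightarrow} \C_{(\nu,\bar{\mu}, \bar{\sigma})} \stackrel{\omega(\mathcal{E})}{\longrightarrow} \Ho^2_N(H,\Z(I)),$$
and by Proposition~\ref{wells6 sb} there is an isomorphism $\psi : \Z^1_N(H,\Z(I)) \xrightarrow{\ \sim\ } \Autb^{H,I}(E)$. So the only thing to do is substitute the first term: replace $\Autb^{H,I}(E)$ by $\Z^1_N(H,\Z(I))$ via $\psi$, and replace the first map $\Autb^{H,I}(E) \hookrightarrow \Autb_I(E)$ by the composite with $\psi$. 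The resulting sequence is exact precisely because $\psi$ is an isomorphism and exactness at $\Autb_I(E)$ and at $\C_{(\nu,\bar{\mu},\bar{\sigma})}$ is unchanged. The statement about $\omega(\mathcal{E})$ being only a derivation is already recorded in Theorem~\ref{wells5 sb} and in Lemma~\ref{wells3 sb}.

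Concretely, I would write: by Proposition~\ref{wells6 sb}, $\Autb^{H,I}(E) \cong \Z^1_N(H,\Z(I))$ via the map $\psi$ sending $\lambda$ to the automorphism $s(h)\circ y \mapsto s(h)\circ \lambda(h)\circ y$; composing the inclusion $\Autb^{H,I}(E) \hookrightarrow \Autb_I(E)$ of Theorem~\ref{wells5 sb} with $\psi^{-1}$ (or equivalently, precomposing with $\psi$) turns the left end of the Wells sequence of Theorem~\ref{wells5 sb} into $0 \rightarrow \Z^1_N(H,\Z(I)) \rightarrow \Autb_I(E)$, and exactness is preserved under this isomorphism of the kernel term. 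The middle portion, namely exactness at $\Autb_I(E)$ (where the image of $\Z^1_N(H,\Z(I))$ equals $\Ker \rho(\mathcal{E}) = \Autb^{H,I}(E)$ by the very definition of $\Autb^{H,I}(E)$ as $\Ker\rho(\mathcal{E})$) and exactness at $\C_{(\nu,\bar{\mu},\bar{\sigma})}$ (which is the content $\IM(\rho(\mathcal{E})) = \Ker(\omega(\mathcal{E}))$ of Proposition~\ref{wells4 sb}), carries over verbatim from Theorem~\ref{wells5 sb}.

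There is essentially no obstacle here: the theorem is a purely formal restatement of Theorem~\ref{wells5 sb} after identifying its leftmost term via Proposition~\ref{wells6 sb}. The one point worth a sentence of care is that $\psi$ really lands in $\Autb^{H,I}(E)$, i.e.\ that the automorphism $\gamma_\lambda : s(h)\circ y \mapsto s(h)\circ\lambda(h)\circ y$ is a well defined skew brace automorphism of $E$ with $(\gamma_\lambda)_I = \Id$ and $(\gamma_\lambda)_H = \Id$; but this is exactly what Proposition~\ref{wells6 sb} asserts, so in the proof of Theorem~\ref{wells7 sb} itself we may simply invoke it. Hence the proof reads: \emph{Combining Theorem~\ref{wells5 sb} with the isomorphism of Proposition~\ref{wells6 sb}, and noting that exactness is preserved when the kernel term $\Autb^{H,I}(E)$ is replaced by the isomorphic group $\Z^1_N(H,\Z(I))$, yields the claimed exact sequence; the final clause on $\omega(\mathcal{E})$ is immediate from Theorem~\ref{wells5 sb}.}
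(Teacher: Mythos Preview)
Your proposal is correct and matches the paper's approach exactly: the paper states Theorem~\ref{wells7 sb} without a separate proof, as an immediate consequence of Theorem~\ref{wells5 sb} together with the isomorphism of Proposition~\ref{wells6 sb}, which is precisely the assembly you carry out.
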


\section{Acknowledgements}

I am grateful to my supervisor Prof.  M. K.  Yadav for his constant support, comments and suggestions while doing this project.  I would like to thank Prof.  L.  Vendramin for his kind help in writing GAP code.  The author acknowledge Harish-Chandra Research institute for fantastic facilities and for the serene ambience that it facilitates.

\end{document}